\documentclass[reqno]{amsart}
\usepackage{amsthm}
\usepackage{amsmath}
\usepackage{amssymb}
\usepackage{graphicx}
\usepackage{xcolor}
\usepackage{hyperref}
\usepackage{todonotes}
\usepackage{comment}
\usepackage[a4paper, total={6in, 8in}]{geometry}

\theoremstyle{plain}
\newtheorem{thm}{\protect\theoremname}
\theoremstyle{remark}

\newtheorem{lemma}[thm]{Lemma}
\newtheorem{theorem}[thm]{Theorem}
\newtheorem{remark}[thm]{Remark}
\newtheorem{proposition}[thm]{Proposition}
\newtheorem{definition}[thm]{Definition}

\newcommand{\sign}{\text{sign}}

\begin{document}

\title{A Complete Spectral Analysis of the CEV Operator with Applications to Arbitrage}

\date{\today}

\begin{abstract}
We provide a complete Sturm–Liouville spectral analysis of the Constant Elasticity of Variance (CEV) operator. By transforming the corresponding Fokker–Planck operator into a generalized Laguerre operator, we explicitly characterize its self-adjoint extensions, boundary conditions, spectra, and eigenfunctions across all elasticity regimes. We then relate these spectral features to arbitrage phenomena in the CEV model, showing how boundary behavior and positive harmonic functions encode the distinction between attainable-boundary arbitrage mechanisms and strict-local-martingale bubble regimes. The result is an explicit operator-theoretic perspective on the link between CEV dynamics, no-arbitrage, and spectral theory. 

\end{abstract}

\author{Filippo Beretta}
\email{filippo.beretta@math.ethz.ch}
\address{Department of Mathematics, ETH Z\"{u}rich, Rämistrasse 101
8092 Z\"{u}rich, Switzerland}

\author{Florian Kogelbauer}
\email{floriank@ethz.ch}
\address{Department of Mathematics, ETH Z\"{u}rich, Rämistrasse 101
8092 Z\"{u}rich, Switzerland}

\maketitle
\let\thefootnote\relax\footnotetext{Filippo Beretta gratefully acknowledges partial support by the SNF project MINT 205121-21981.}

\section{Introduction}

Operator spectral theory provides a fundamental analytical framework in financial mathematics, offering a systematic way to study the generators and pricing operators associated with stochastic asset dynamics. In derivative pricing, for instance in the Black--Scholes framework, spectral decompositions of the infinitesimal generator lead to eigenfunction expansions and long-maturity asymptotics for option prices \cite{davydov2003pricing}. In interest-rate theory, particularly for affine term-structure models, spectral representations of pricing semigroups yield tractable formulas for bond and option valuation \cite{chazal2018option}. In long-run asset pricing, Perron--Frobenius methods applied to positive valuation operators characterize the long-run discount rate and risk adjustments through the dominant eigenvalue and eigenfunction \cite{hansen2009longterm}. In empirical term-structure analysis, spectral decompositions of covariance operators provide the basis for principal component analysis of yield curves and for the identification of level, slope, and curvature factors \cite{litterman1991common}.\\

In this work, we analyze the Constant Elasticity of Variance (CEV) model with elasticity parameter $\gamma$ through the lens of Sturm--Liouville theory. The model was first introduced by \cite{cox1996constant} for $\gamma \leq 2$ and was extended to the regime $\gamma>2$ in \cite{EmanuelMacBeth1982}. The CEV model has become a widely used tool in financial mathematics since it admits closed-form analytical pricing formulas for several classes of options \cite{Schroder1989,DavydovLinetsky2001,Delbean,CarrLinetsky2006}. We study the associated Fokker--Planck operator under general boundary conditions and discuss non-standard extensions from Hilbert spaces to Krein spaces. Our approach relies on transforming the CEV operator into an equivalent Laguerre operator, whose spectral theory is well understood \cite{Krall1979laguerre,derkach1998extensions}. Spectral properties of the CEV operator in mathematical finance have been analyzed in \cite{Fouque2018SpectralSV}. In \cite{Linetsky2004CEV}, it is shown that the generator exhibits a purely discrete spectrum in certain parameter regimes, while \cite{Linetsky2008Spectral} demonstrates that the associated pricing problem amounts to selecting a self-adjoint extension of the generator. Further connections are developed in \cite{CarrLinetsky2006}, where the CEV model with killing is interpreted in terms of Bessel processes, and in \cite{DavydovLinetsky2001}, where the eigenfunctions are derived and closed-form spectral pricing formulas are obtained.\\

Sturm--Liouville theory itself has several applications in mathematical finance, most prominently in the so-called Ross recovery \cite{ross2015recovery,von2020numerical}. In \cite{park2016ross}, the authors extend the continuous-time Ross recovery framework beyond recurrent state dynamics and show that, for transient diffusion, the risk-neutral measure alone is insufficient to identify the objective measure. Building on this perspective, \cite{ahn2020examining} establishes that Sturm--Liouville-based Ross recovery is feasible if and only if both boundary endpoints of the state space are limit-point under suitable integrability conditions, and further demonstrates how non-uniqueness emerges once these conditions are relaxed.\\

A central theme of the present paper is the connection between spectral theory and arbitrage. Arbitrage is one of the foundational concepts of mathematical finance. It underlies fair pricing, market equilibrium, and the absence of free lunches, linking probabilistic models with economic principles. It is typically understood as the possibility of earning a risk-free profit with zero net investment. Under an equivalent risk-neutral martingale measure, discounted traded-asset prices are martingales. This martingale property expresses no-arbitrage probabilistically, since discounted prices have zero drift after an equivalent change of measure. There is, however, also a functional-analytic viewpoint. From this perspective, no-arbitrage can instead be characterized by the existence of a strictly positive linear functional on the space of attainable payoffs, together with spectral restrictions on the associated pricing operator that rule out positive spectral growth \cite{jia2006caracterisation,kreps1981arbitrage}. Thus, the fundamental theorem of asset pricing has both a probabilistic formulation, in terms of martingale measures, and an operator-theoretic formulation, in terms of positive linear functionals and spectral properties.\\

This operator-theoretic viewpoint is especially natural when one moves from path-wise stochastic calculus to the forward-equation or Fokker--Planck formulation. Instead of following individual sample paths, one studies the evolution of probability densities under a linear parabolic differential operator. The infinitesimal generator of the underlying Markov process then becomes the central object: derivative prices are obtained by applying the associated semigroup to payoff functions, and long-maturity behavior is governed by the spectrum of this operator. Eigenvalues determine asymptotic discount rates, eigenfunctions encode principal modes of the pricing dynamics, and spectral gaps control convergence to equilibrium. In this way, spectral theory provides a unifying analytical framework linking no-arbitrage, martingale measures, and the dynamical evolution of prices.\\

\begin{figure}
    \centering
    \includegraphics[width=1\linewidth]{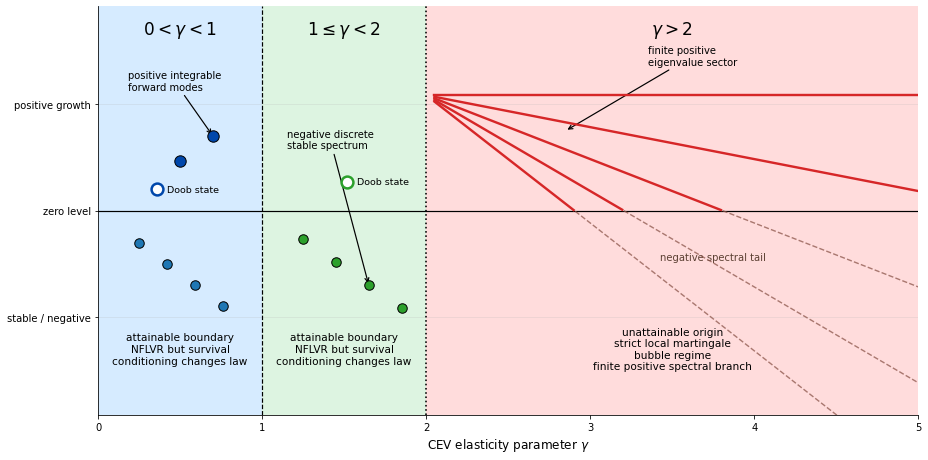}
    \caption{\footnotesize
Schematic spectral and arbitrage regimes of the CEV operator as the elasticity parameter $\gamma$ varies. For $0<\gamma<1$, the origin is attainable and limit-circle, and suitable self-adjoint extensions may admit positive integrable forward modes, which encode the boundary-conditioning mechanism and the associated Doob $h$-transform. For $1\leq\gamma<2$, the spectrum is negative and discrete, while survival conditioning is represented only by a generalized backward or boundary state. The value $\gamma=2$ is the singular Black--Scholes limit, requiring logarithmic rather than Laguerre coordinates. For $\gamma>2$, the origin is unattainable, so attainable-boundary arbitrage disappears. The relevant phenomenon is instead strict-local-martingale behavior, reflected schematically by a finite positive eigenvalue sector together with a negative spectral tail. The figure illustrates how boundary classification, positive spectral states, and arbitrage mechanisms are tied together in the CEV model.
}
    \label{CEV_Spectrum_plot}
\end{figure}

For the CEV model, the link between spectrum and arbitrage is particularly transparent because both are governed by the behavior of the diffusion at the boundary. When $\gamma<2$, the origin is attainable, and arbitrage phenomena are closely related to conditioning the process not to hit this absorbing boundary. This conditioning is naturally implemented by a Doob $h$-transform, where $h$ is a strictly positive harmonic function of the backward generator. At the operator level, the transform replaces the original generator $G_\gamma$ by
\[
    G_\gamma^h f := h^{-1}G_\gamma(hf),
\]
thereby producing a new positive semigroup and modifying the drift by a logarithmic derivative of $h$. In probabilistic terms, this corresponds to changing the law of the diffusion by conditioning on survival. In financial terms, it is precisely this change of law, which excludes paths reaching the absorbing boundary, that underlies the attainable-boundary arbitrage mechanism described for the CEV model through its connection with Bessel processes \cite{Delbean,CarrLinetsky2006}.\\

The spectral interpretation of this construction is that the relevant conditioning function is not an arbitrary auxiliary object, but a positive harmonic state of the pricing generator. Thus, the Doob transform selects a distinguished positive solution of the eigenvalue equation at eigenvalue zero, or equivalently a ground-state-type mode after a suitable spectral shift. The boundary classification determines whether this positive state is represented by an admissible forward eigenfunction or only by a generalized state of the adjoint problem. In the regime $0<\gamma<1$, suitable self-adjoint extensions admit positive integrable forward modes, so the conditioning mechanism is visible directly in the forward spectrum. In the regime $1\leq\gamma<2$, the forward spectrum is negative and discrete, and the same survival conditioning appears only through a generalized boundary state of the backward generator.\\

The regime $\gamma>2$ is different. In this case, the origin is unattainable, so conditioning the process to remain positive is trivial and no analogous boundary-induced Doob transform gives rise to the same arbitrage mechanism. Nevertheless, the discounted CEV price may fail to be a true martingale and become a strict local martingale, producing the bubble phenomenon first observed in \cite{EmanuelMacBeth1982}. Spectrally, this regime is reflected in a different boundary classification and in the presence of a finite positive eigenvalue sector together with a negative spectral tail. The distinction between attainable-boundary arbitrage for $\gamma<2$ and strict-local-martingale bubbles for $\gamma>2$ can therefore be read from the positive spectral and harmonic structure of the CEV operator, see Figure \ref{CEV_Spectrum_plot}.\\

The contribution of this paper is to make this correspondence explicit. We provide a complete Sturm--Liouville spectral analysis of the CEV Fokker--Planck operator across all elasticity regimes, characterize the relevant self-adjoint extensions and boundary conditions, and identify the spectral objects that encode arbitrage-related behavior. In particular, positive harmonic functions and positive eigenfunctions play a central role: they determine admissible Doob transforms, describe conditioning mechanisms at attainable boundaries, and distinguish these mechanisms from bubble regimes driven by strict local martingales.\\

The outline of the paper is as follows. In Section~\ref{sec:motivation}, we review the various notions of arbitrage for continuous-time stochastic processes and provide the intuition linking these properties to the operator spectrum. From that point forward, we specialize in the CEV model. Section~\ref{sec_spectrum_CEV} is devoted to a detailed, explicit analysis of the spectral properties of the CEV generator. Finally, Section~\ref{sec:arbitrage_CEV} focuses on the CEV model's arbitrage properties. We detail the connection between the operator's spectrum and no-arbitrage conditions, exploring the role of spectral operations such as spectral shifts and the application of Doob's $h$-transform.




\section{Motivation: Connections Between Arbitrage and Spectral Properties}\label{sec:motivation}
In this section, we first recall the fundamental definitions of arbitrage for continuous-time processes. For simplicity, we focus on scalar processes throughout. After elaborating the Fundamental Theorem of Asset Pricing and various notions of arbitrage, we move from a probabilistic approach to spectral properties of the generator. We recover classical results through an operator-theoretic framework thus motivating the detailed spectral analysis of the CEV process in Section \ref{sec_spectrum_CEV}. These results subsequently allow us to characterize arbitrage-related properties through a detailed spectral analysis as discussed in Section \ref{subsec:arb_spec}.


\subsection{The Fundamental Theorem of Asset Pricing}\label{subsec:FTAP}

We consider a market on a finite time horizon $[0,T]$ consisting of a risk-free bond $B$ and a risky asset $X$, defined on a filtered probability space $(\Omega,\mathcal{F},\mathbb{F},\mathbb{P})$. Let $W$ be a $\mathbb{P}$-Brownian motion and assume that $\mathbb{F}$ is the $\mathbb{P}$-augmented filtration generated by $W$, satisfying the usual conditions of right-continuity and completeness. The asset dynamics are given by
\begin{equation}\label{generic_market}
\begin{aligned}
\mathrm{d} B_t &= r B_t \,\mathrm{d}t, \quad B_0 = 1, \\
\mathrm{d} X_t &= \mu(X_t)\,\mathrm{d}t + \sigma(X_t)\,\mathrm{d}W_t, \quad X_0 = x \in \mathbb{R},
\end{aligned}
\end{equation}
where $r \ge 0$ is the risk-free rate. The coefficient functions $\mu: \mathbb{R} \to \mathbb{R}$ and $\sigma: \mathbb{R} \to \mathbb{R}$ are assumed to be regular enough to ensure existence and uniqueness of a weak solution to \eqref{generic_market}. An investment strategy is a pair $(\phi,\psi)$ of $\mathbb{F}$-predictable processes, representing holdings in the bond and the risky asset. The associated portfolio value is
\begin{equation}
V_t := \phi_t B_t + \psi_t X_t, \qquad t \in [0,T].
\end{equation}
We restrict attention to self-financing portfolios, meaning that changes in value arise only from asset price movements. Defining the discounted processes $U_t := e^{-rt}X_t$ and $\tilde V_t := e^{-rt}V_t$, the self-financing condition yields
\begin{equation}
\tilde V_t = V_0 + \int_0^t \psi_s \,\mathrm{d}U_s, \qquad t \in [0,T].
\end{equation}
Hence, a portfolio is fully characterized by the initial capital $V_0$ and the strategy $\psi$ and we henceforth denote it by $V^{V_0,\psi}$. We further restrict to portfolios bounded from below, thereby excluding doubling strategies. The set of admissible strategies, ensuring that $V^{0,\psi}$ is bounded from below, is denoted by $\mathcal{A}$. We denote by $\mathbb{L}^0_+(\mathbb{R},\mathcal{F}_T,\mathbb{P})$ and $\mathbb{L}^\infty(\mathbb{R},\mathcal{F}_T,\mathbb{P})$ the sets of non-negative and essentially bounded $\mathcal{F}_T$-measurable random variables, respectively. Then, we introduce the sets 
\begin{equation}
K_0(\mathbb{F},\mathbb{P}):= \big\{\tilde{V}^{0,\psi}_T, \;\psi \in \mathcal{A}\big\}, \quad C_0(\mathbb{F},\mathbb{P}) := K_0(\mathbb{F},\mathbb{P}) - \mathbb{L}^0_+(\mathbb{R},\mathcal{F}_T,\mathbb{P})
\end{equation}
to denote the set of portfolio values at time $T$ and of $\mathcal{F}_T$-measurable payoffs that can be dominated by the terminal value of an admissible portfolio. Finally, we introduce
\begin{equation}
C(\mathbb{F},\mathbb{P}) := C_0(\mathbb{F},\mathbb{P}) \cap \mathbb{L}^\infty(\mathbb{R},\mathcal{F}_T,\mathbb{P}).
\end{equation}

\begin{definition}
    The market satisfies the \emph{No Arbitrage} (NA) condition if
    \begin{equation}
    C(\mathbb{F,\mathbb{P}}) \cap \mathbb{L}^\infty_+(\mathbb{R},\mathcal{F}_T,\mathbb{P}) = \{0\}.
    \end{equation}
\end{definition}
The interpretation is as follows: an arbitrage opportunity is a strategy $\psi \in \mathcal{A}$ such that
\begin{equation}
\tilde V^{0,\psi}_T \ge 0 \quad \mathbb{P}\text{-a.s.}, \quad \mathbb{P}\big[\tilde V^{0,\psi}_T > 0\big] > 0.
\end{equation}
The (NA) condition requires that any such strategy satisfy $\tilde V^{0,\psi}_T = 0$ $\mathbb{P}$-almost surely. \\

In continuous time, a stronger and more appropriate notion is \emph{No Free Lunch with Vanishing Risk} (NFLVR), see \cite{delbaen1993nonarbitrage,delbaen1994arbitrage,delbaen2004whatis}.

\begin{definition}
    The market satisfies the \emph{No–Free Lunch with Vanishing Risk} (NFLVR) condition if
    \begin{equation}
    \overline{C}(\mathbb{F,\mathbb{P}}) \cap \mathbb{L}^\infty_+(\mathbb{R},\mathcal{F}_T,\mathbb{P}) = \{0\},
    \end{equation}
    where $\overline{C}(\mathbb{F},\mathbb{P})$ denotes the closure of $C(\mathbb{F,\mathbb{P}})$ in the norm of $\mathbb{L}^\infty(\mathbb{R},\mathcal{F}_T,\mathbb{P})$.
\end{definition}

It is clear that the (NFLVR) condition is stronger than (NA). To compare the two, we introduce the set $\mathcal{A}_1 \subset \mathcal{A}$ as the one of admissible strategies such that $\tilde{V}^{0,\psi}$ is bounded from below by $-1$. Similarly to $K_0(\mathbb{F},\mathbb{P})$, one may define 
\begin{equation}
K^1_0(\mathbb{F},\mathbb{P}):= \big\{\tilde{V}^{0,\psi}_T, \;\psi \in \mathcal{A}_1\big\}.
\end{equation}
The last notion of arbitrage we report is (NUPBR), introduced in \cite{karatzas2007numeraire}.
\begin{definition}
    The market satisfies the \emph{No Unbounded Profit Bounded Risk} (NUPBR) condition if $K_0^1(\mathbb{F},\mathbb{P})$ is bounded in $\mathbb{P}$-probability.
\end{definition}
The following lemma shows that (NUPBR) is the missing ingredient required to connect (NA) and (NFLVR).
\begin{lemma}
    The market satisfies (NFLVR) if and only if it satisfies both (NA) and (NUPBR).
\end{lemma}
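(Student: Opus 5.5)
We prove the two implications separately. The characterization of (NFLVR) through sequences of admissible strategies does the work, and no martingale-measure machinery is needed. First we record the standard reformulation. (NFLVR) fails if and only if there exist $\psi_n \in \mathcal{A}$ and $\varepsilon_n \downarrow 0$ such that
\[
\tilde V^{0,\psi_n}_T \ge -\varepsilon_n \quad\text{and}\quad \tilde V^{0,\psi_n}_T \to f \ \ \mathbb{P}\text{-a.s.}
\]
for some $f \in \mathbb{L}^\infty_+$ with $\mathbb{P}[f>0]>0$. This follows by unwinding the $\mathbb{L}^\infty$-closure of $C$. Take $g_n \in C$ with $\|g_n - f\|_\infty \to 0$. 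Write $g_n = \tilde V^{0,\psi_n}_T - h_n$ with $h_n \ge 0$. Then $\tilde V^{0,\psi_n}_T \ge f - \|g_n-f\|_\infty \ge -\varepsilon_n$. The lower bound on the wealth process itself, and not only on its terminal value, would come from the supermartingale property of admissible stochastic integrals under a local martingale measure. To stay elementary we avoid that input and instead work directly with the sets $K_0$ and $K_0^1$.

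\emph{(NFLVR) $\Rightarrow$ (NA) and (NUPBR).} (NA) is immediate because $C \subset \overline{C}$. For (NUPBR) we argue by contradiction. Suppose $K_0^1$ is not bounded in probability. Then there exist $\alpha>0$ and $\psi_n \in \mathcal{A}_1$ with $\mathbb{P}[\tilde V^{0,\psi_n}_T > n] \ge \alpha$. Rescale to $\psi_n/n$. The wealth is then bounded below by $-1/n$, and the terminal value exceeds $1$ on a set of probability at least $\alpha$. The rescaled terminal values are $-1/n$-admissible but need not converge. We therefore apply a Koml\'os-type lemma for sequences of random variables bounded below (Delbaen--Schachermayer), which gives forward convex combinations converging a.s. to some $f \ge 0$. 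Convexity of $\mathcal{A}$ and linearity of $\psi \mapsto \tilde V^{0,\psi}$ keep these combinations admissible. The main obstacle is to guarantee $\mathbb{P}[f>0]>0$, since mass could escape to infinity. We handle this by truncating first, replacing each terminal value by its minimum with $1$, which is allowed because we subtract elements of $\mathbb{L}^0_+$ inside $C_0$. After truncation the sequence is uniformly bounded, so the limit satisfies $\mathbb{E}[f] \ge \alpha - o(1) > 0$. The truncated combinations lie in $C$ and converge to $f$ in $\mathbb{L}^\infty$ up to the $\varepsilon_n$ error, after an Egorov-type adjustment. This produces a free lunch with vanishing risk, a contradiction.

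\emph{(NA) and (NUPBR) $\Rightarrow$ (NFLVR).} Suppose (NFLVR) fails, and take $\psi_n$, $\varepsilon_n$ and $f$ as in the reformulation. The strategies $\psi_n/\varepsilon_n$ lie in $\mathcal{A}_1$, and their terminal values $\varepsilon_n^{-1}\tilde V^{0,\psi_n}_T$ tend to $+\infty$ on $\{f>0\}$, a set of positive probability. Hence $K_0^1$ is unbounded in probability, contradicting (NUPBR). Here (NA) is not even needed. It enters only through the statement being symmetric, since (NFLVR) implies (NA) trivially. We expect the delicate step to be the Koml\'os/truncation argument in the first direction. In a paper of this scope we would likely cite Delbaen--Schachermayer and Karatzas--Kardaras for that step rather than reproduce it.
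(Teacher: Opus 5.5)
Your direction (NFLVR) $\Rightarrow$ (NUPBR) is the standard Delbaen--Schachermayer argument, and it is sound. Truncation at $1$ gives $\mathbb{E}[f]\ge\alpha$. The Egorov step can be made exact: take $\Omega_\delta$ on which the convex combinations $Z_n$ converge uniformly, set $\tilde f=f\mathbf{1}_{\Omega_\delta}$, and note that $Z_n\wedge\tilde f\in C$ with $\|Z_n\wedge\tilde f-\tilde f\|_\infty\le\max\bigl(\sup_{\Omega_\delta}|Z_n-f|,\,1/n\bigr)\to0$. For comparison, the paper gives no proof of this lemma; it only cites Karatzas--Kardaras and Kardaras.

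The genuine gap is in the converse direction.
\begin{itemize}
\item The $\mathbb{L}^\infty$-closure only yields $\tilde V^{0,\psi_n}_T\ge f-\varepsilon_n$, which bounds the \emph{terminal} value. It does not give $\tilde V^{0,\psi_n}_T\to f$ either, but you do not need that.
\item Membership $\psi_n/\varepsilon_n\in\mathcal{A}_1$ requires $\tilde V^{0,\psi_n}_t\ge-\varepsilon_n$ for \emph{all} $t\in[0,T]$. An admissible strategy can finish above $-\varepsilon_n$ after dipping to $-c_n$ with $c_n/\varepsilon_n\to\infty$. You flagged exactly this in your first paragraph and then used the bound anyway.
\item This is precisely where (NA) enters, so your claim that (NA) is not needed is false. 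For a price following the three-dimensional Bessel process, (NUPBR) holds but there is an arbitrage $\psi$. Its wealth must go below zero, since otherwise $n\psi\in\mathcal{A}_1$ would violate (NUPBR). Hence $\psi/\varepsilon_n\notin\mathcal{A}_1$ for small $\varepsilon_n$.
\end{itemize}

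The missing lemma is: under (NA), if $\psi\in\mathcal{A}$ has wealth bounded below by $-c$ and $\tilde V^{0,\psi}_T\ge-\varepsilon$, then $\tilde V^{0,\psi}_t\ge-\varepsilon$ for all $t$. Suppose not. Choose $\delta>0$ such that $\tau:=\inf\{t:\tilde V^{0,\psi}_t\le-\varepsilon-\delta\}$ satisfies $\mathbb{P}[\tau\le T]>0$. By path continuity, the strategy $\psi_t\mathbf{1}_{\{t>\tau\}}$ has wealth $0$ before $\tau$ and wealth $\tilde V^{0,\psi}_t+\varepsilon+\delta\ge-c+\varepsilon+\delta$ after it. Its terminal value is $(\tilde V^{0,\psi}_T+\varepsilon+\delta)\mathbf{1}_{\{\tau\le T\}}\ge\delta\,\mathbf{1}_{\{\tau\le T\}}$, which is an arbitrage.

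With this lemma, $\psi_n/\varepsilon_n\in\mathcal{A}_1$, and your blow-up on $\{f>0\}$ then contradicts (NUPBR). The supermartingale route you mention would presuppose an ELMM, which is circular here.
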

We refer to \cite{karatzas2007numeraire,kardaras2010finitely} for the proof. As the main result of this section, we recall the Fundamental Theorem of Asset Pricing, see \cite{Schachermayer,delbaen1998fundamental,delbaen2006mathematics}.
\begin{theorem}[Fundamental Theorem of Asset Pricing]\label{FTAP}
Assume the market follows \eqref{generic_market} and that trading is restricted to self-financing portfolios with admissible strategies $\psi\in\mathcal{A}$. Then the market satisfies (NFLVR) if and only if there exists a probability measure $\mathbb{Q}$, equivalent to $\mathbb{P}$ on $\mathcal{F}_T$, such that the discounted price process $U$ is an $(\mathbb{F},\mathbb{Q})$-local martingale.
\end{theorem}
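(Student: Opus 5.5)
The natural route is the Delbaen--Schachermayer one, adapted to the continuous one-dimensional setting. Because $U$ is continuous, ``$\sigma$-martingale'' and ``local martingale'' coincide. Throughout I use the earlier Lemma: (NFLVR) holds if and only if both (NA) and (NUPBR) hold.

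\emph{Sufficiency} (``if''). Suppose $\mathbb{Q}\sim\mathbb{P}$ makes $U$ an $(\mathbb{F},\mathbb{Q})$-local martingale.
\begin{enumerate}
\item Take any $\psi\in\mathcal{A}$. The stochastic integral $\tilde V^{0,\psi}=\int_0^\cdot\psi_s\,\mathrm{d}U_s$ is a $\mathbb{Q}$-local martingale that is bounded from below.
\item By Fatou's lemma it is a $\mathbb{Q}$-supermartingale, so $\mathbb{E}_{\mathbb{Q}}[\tilde V^{0,\psi}_T]\le 0$.
\item The inequality survives passing to elements of $C$ and to their $L^\infty$-limits, because $\mathbb{Q}\ll\mathbb{P}$ and uniform convergence commutes with the expectation.
\item Hence any $f\in\overline C\cap\mathbb{L}^\infty_+$ satisfies $\mathbb{E}_{\mathbb{Q}}[f]\le0$. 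Since $f\ge0$ and $\mathbb{Q}\sim\mathbb{P}$, this forces $f=0$, which is (NFLVR).
\end{enumerate}

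\emph{Necessity} (``only if'') splits into two steps.

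\emph{Step 1.} Show that (NFLVR) makes $C$ weak-$^*$ closed in $\mathbb{L}^\infty$.
\begin{itemize}
\item By Krein--\v{S}mulian it suffices to show that $C\cap\{\|f\|_\infty\le1\}$ is closed under a.s.\ convergence.
\item Take a sequence $f_n=\tilde V^{0,\psi^n}_T-h_n$ converging a.s.
\item Use (NUPBR) to extract forward convex combinations of the $\tilde V^{0,\psi^n}_T$ that converge a.s.; this is a Koml\'os-type lemma.
\item Use (NA) to show that the limit is dominated by the terminal value of an admissible strategy. This needs the closure of the set of stochastic integrals in the semimartingale (\'Emery) topology, obtained via the Memin theorem.
\end{itemize}

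\emph{Step 2.} Obtain the measure $\mathbb{Q}$ by Hahn--Banach separation (Kreps--Yan argument).
\begin{itemize}
\item Because $C$ is a weak-$^*$ closed convex cone with $C\cap\mathbb{L}^\infty_+=\{0\}$, each indicator $\mathbf 1_A$ with $\mathbb{P}(A)>0$ can be separated from $C$ by a functional in $\mathbb{L}^1_+$.
\item An exhaustion argument, taking countable convex combinations, produces a strictly positive density $Z=\mathrm{d}\mathbb{Q}/\mathrm{d}\mathbb{P}$ with $\mathbb{E}_{\mathbb{Q}}[g]\le0$ for all $g\in C$.
\item Localize $U$ by the stopping times $\tau_k=\inf\{t:|U_t|\ge k\}$. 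Testing with simple strategies $\pm\mathbf 1_{B}\mathbf 1_{(s,t]}$ stopped at $\tau_k$, with $B\in\mathcal{F}_s$, gives $\mathbb{E}_{\mathbb{Q}}[\mathbf 1_B(U_{t\wedge\tau_k}-U_{s\wedge\tau_k})]=0$.
\item So each stopped process $U^{\tau_k}$ is a $\mathbb{Q}$-martingale, and therefore $U$ is a $\mathbb{Q}$-local martingale.
\end{itemize}

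\emph{Main obstacle.} The hard part is Step 1, the weak-$^*$ closedness, which is the core of Delbaen--Schachermayer. In the present Brownian, one-dimensional setting I would first try a shortcut that avoids it. (NUPBR) yields a strictly positive local martingale deflator $Y$, via Karatzas--Kardaras. By martingale representation with respect to $W$, $Y=\mathcal{E}(-\int\theta\,\mathrm{d}W)$, where $\theta$ is the market price of risk, i.e.\ $\sigma\theta=\mu-rX$ where $\sigma\neq0$. The remaining task is to show that (NA) forces $Y$ to be a true martingale, so that $\mathrm{d}\mathbb{Q}=Y_T\,\mathrm{d}\mathbb{P}$ is a probability measure. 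If $\mathbb{E}[Y_T]<1$, I would attempt to build an arbitrage by replicating the claim $1$ at cost $\mathbb{E}[Y_T]<1$. The delicate point is admissibility of the replicating strategy. In full generality this argument fails, because NA+NUPBR can hold with a strict deflator only when no ELMM exists. I would therefore fall back on citing the general closedness theorem, exactly as the paper does via the cited references.
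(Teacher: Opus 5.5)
Your proposal is correct and follows the paper. The paper gives no proof of its own; it recalls the theorem from the Delbaen--Schachermayer references. Your sufficiency argument is complete, and for necessity you sketch exactly that cited route (weak-$^*$ closedness of $C$, Kreps--Yan separation, localization) and, like the paper, defer the hard closedness step to the literature.

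One correction to your aside: your stated reason for dropping the shortcut is backwards. Since (NA)+(NUPBR)$=$(NFLVR), a strict deflator can coexist with (NA)+(NUPBR) only when an ELMM nevertheless exists, i.e.\ in an incomplete market. When $\sigma\neq0$ and $\mathbb{F}$ is generated by $W$, the market is complete and the shortcut works. The wealth $V_t=\mathbb{E}[Y_T\mid\mathcal{F}_t]/Y_t\ge0$ replicating the claim $1$ from $V_0=\mathbb{E}[Y_T]$ is automatically admissible, so (NA) forces $\mathbb{E}[Y_T]=1$.
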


The measure $\mathbb{Q}$ is typically referred to as \emph{Equivalent Local Martingale Measure} (ELMM). Although this theorem provides a complete characterization of (NFLVR), it is generally difficult to verify in practice. As observed by \cite{karatzas2007numeraire}, there is generally no computationally feasible method to detect arbitrage or to verify the existence of an equivalent martingale measure directly from the asset dynamics.\\

For the single risky asset in \eqref{generic_market}, (NFLVR) admits a more explicit characterization (see \cite[Sections 1.4--1.5]{karatzas1998methods} for proofs). We begin presenting a necessary condition.
\begin{proposition}\label{prop_nec_condition}
    If (NFLVR) holds, there exists a $\mathbb{F}$-predictable process $\lambda$ satisfying 
    \begin{equation}\label{def_lambda}
    \sigma(X_t) \lambda_t=\mu(X_t)-r X_t, \quad \mathrm{d} t \otimes \mathrm{dP} \text {-a.e. on } [0, T] \times \Omega.
    \end{equation}
\end{proposition}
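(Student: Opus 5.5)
We derive the claim from the Fundamental Theorem of Asset Pricing (Theorem \ref{FTAP}) together with Girsanov's theorem and the martingale representation property of the Brownian filtration $\mathbb{F}$. Assume (NFLVR). By Theorem \ref{FTAP} there is a measure $\mathbb{Q}\sim\mathbb{P}$ on $\mathcal{F}_T$ under which $U_t=e^{-rt}X_t$ is a local martingale. Let $Z_t:=\mathbb{E}_{\mathbb{P}}[\mathrm{d}\mathbb{Q}/\mathrm{d}\mathbb{P}\mid\mathcal{F}_t]$ denote the density process.

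The first step is to represent $Z$. Since $Z$ is a strictly positive $\mathbb{P}$-martingale in the Brownian filtration, the martingale representation theorem gives $Z_t = 1+\int_0^t \eta_s\,\mathrm{d}W_s$ for some predictable $\eta$ with $\int_0^T\eta_s^2\,\mathrm{d}s<\infty$ a.s. Strict positivity (and continuity) of $Z$ lets us set $\lambda_t:=-\eta_t/Z_t$, which is predictable and locally square integrable. This yields $Z=\mathcal{E}\bigl(-\int\lambda\,\mathrm{d}W\bigr)$. By Girsanov, $W^{\mathbb{Q}}_t:=W_t+\int_0^t\lambda_s\,\mathrm{d}s$ is then a $\mathbb{Q}$-Brownian motion.

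The second step is to compute the dynamics of $U$ under $\mathbb{Q}$. Itô's formula gives
\begin{equation*}
\mathrm{d}U_t = e^{-rt}\bigl(\mu(X_t)-rX_t-\sigma(X_t)\lambda_t\bigr)\,\mathrm{d}t + e^{-rt}\sigma(X_t)\,\mathrm{d}W^{\mathbb{Q}}_t .
\end{equation*}
The stochastic integral is a continuous $\mathbb{Q}$-local martingale, so $U$ is a continuous semimartingale under $\mathbb{Q}$. Its finite-variation part
\[
A_t=\int_0^t e^{-rs}\bigl(\mu(X_s)-rX_s-\sigma(X_s)\lambda_s\bigr)\,\mathrm{d}s
\]
is continuous and adapted. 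Since $U$ is a $\mathbb{Q}$-local martingale, $A$ is a continuous local martingale of finite variation, and hence $A\equiv0$ $\mathbb{Q}$-a.s. Thus the integrand vanishes $\mathrm{d}t\otimes\mathrm{d}\mathbb{Q}$-a.e. Because $\mathbb{Q}\sim\mathbb{P}$, it also vanishes $\mathrm{d}t\otimes\mathrm{d}\mathbb{P}$-a.e., and this is exactly \eqref{def_lambda}.

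The main technical point is the first step: we need that every $\mathbb{Q}$ equivalent to $\mathbb{P}$ has a density process of stochastic-exponential form. This is where we use that $\mathbb{F}$ is the augmented Brownian filtration, so that martingale representation holds and $Z$ is continuous and never hits zero. Equivalence of the measures guarantees that $Z$ stays strictly positive, and then $\lambda=-\eta/Z$ is well defined with $\int_0^T\lambda_s^2\,\mathrm{d}s<\infty$ a.s. by continuity of $Z$. The remaining steps are a direct application of Itô's formula together with the uniqueness of the canonical semimartingale decomposition.
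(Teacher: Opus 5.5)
Your proof is correct, but it takes a different route from the one the paper relies on.

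The paper gives no proof of its own and refers to Karatzas--Shreve \cite[Sections 1.4--1.5]{karatzas1998methods}. There the market-price-of-risk equation is derived directly from the absence of arbitrage, without passing through a martingale measure. In the scalar case that argument is elementary. Put $\lambda_t:=(\mu(X_t)-rX_t)/\sigma(X_t)$ on $\{\sigma(X_t)\neq 0\}$ and $\lambda_t:=0$ elsewhere. The only thing left to show is that $\mu(X_t)=rX_t$ holds $\mathrm{d}t\otimes\mathrm{d}\mathbb{P}$-a.e.\ on $\{\sigma(X_t)=0\}$. If this failed, the strategy $\psi_t=e^{rt}\,\mathrm{sign}(\mu(X_t)-rX_t)\mathbf{1}_{\{\sigma(X_t)=0\}}$ would have discounted gains $\int_0^t\mathbf{1}_{\{\sigma(X_s)=0\}}|\mu(X_s)-rX_s|\,\mathrm{d}s$. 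These are nondecreasing (so $\psi\in\mathcal{A}$) and strictly positive at time $T$ with positive probability, i.e.\ an arbitrage. That route needs only (NA) and does not use the Brownian structure of $\mathbb{F}$. It avoids both martingale representation and the deep (Delbaen--Schachermayer) direction of Theorem~\ref{FTAP}.

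Your route uses all of that machinery, but it yields more. The Girsanov kernel of \emph{every} ELMM solves \eqref{def_lambda}, it automatically satisfies $\int_0^T\lambda_s^2\,\mathrm{d}s<\infty$ a.s., and the density process is exactly $\mathcal{E}(-\int\lambda\,\mathrm{d}W)$. This ties the proposition directly to the process $Z$ in \eqref{df_risk_premium_process} and to Proposition~\ref{prop_suff_cond}. It is also the rigorous counterpart of the heuristic in Section~\ref{sect_spectral_arbitrale}, where $\lambda$ is read off from the vanishing drift of deflated prices. Unlike that heuristic, it does not assume the deflator has the Markovian form $e^{\rho t}\phi(X_t)/\phi(X_0)$.
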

The (possibly not unique) process $\lambda$ is denoted as risk-premium process. We provide intuition on the connections of the risk-premium process to spectral properties in Section \ref{sect_spectral_arbitrale}. Assuming $\sigma$ is non-zero everywhere, the unique risk-premium process and its associated density process Z are defined as:
\begin{equation}\label{df_risk_premium_process}
\lambda_t := \frac{\mu(X_t) - rX_t}{\sigma(X_t)}, \quad Z_t := \exp\!\left(-\int_0^t \lambda_s \,\mathrm{d}W_s - \frac{1}{2}\int_0^t \lambda_s^2 \,\mathrm{d}s\right),\qquad t \in [0,T],
\end{equation}
If $\lambda$ is sufficiently integrable (e.g., it is square-integrable on $ [0,T] \times \Omega$), the process $Z$ is at least a $(\mathbb{F},\mathbb{P})$-local martingale. The following sufficient condition then links the martingality of $Z$ to the existence of an ELMM via Girsanov’s Theorem.
\begin{proposition}\label{prop_suff_cond}
    If the process $Z$ is a $(\mathbb{F},\mathbb{P})$-martingale, then the measure $\rm d \mathbb{Q} = Z_T \rm d \mathbb{P}$ defines an ELMM, and (NFLVR) holds.
\end{proposition}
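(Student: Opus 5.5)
To show that $\mathbb{Q}$ defined by $\mathrm{d}\mathbb{Q}=Z_T\,\mathrm{d}\mathbb{P}$ is an ELMM, I would run the standard Girsanov argument and then invoke Theorem \ref{FTAP}.

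First, I check that $\mathbb{Q}$ is a probability measure equivalent to $\mathbb{P}$ on $\mathcal{F}_T$. Since $Z$ is a stochastic exponential, $Z_T>0$ almost surely. The martingale hypothesis gives $\mathbb{E}_{\mathbb{P}}[Z_T]=Z_0=1$, so $\mathbb{Q}$ is a probability measure. Because its density is strictly positive, $\mathbb{Q}\sim\mathbb{P}$ on $\mathcal{F}_T$.

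Next, I apply Girsanov's theorem. Since $Z$ is a true martingale with $\mathrm{d}Z_t=-\lambda_t Z_t\,\mathrm{d}W_t$, the process
\[
W^{\mathbb{Q}}_t := W_t+\int_0^t\lambda_s\,\mathrm{d}s
\]
is an $(\mathbb{F},\mathbb{Q})$-Brownian motion on $[0,T]$. I need $\int_0^T\lambda_s^2\,\mathrm{d}s<\infty$ almost surely. This is implicit in $Z$ being well defined as a local martingale, and it holds under the paper's integrability assumption on $\lambda$.

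Then I rewrite the discounted price. By Itô's product rule,
\[
\mathrm{d}U_t=e^{-rt}\big(\mu(X_t)-rX_t\big)\,\mathrm{d}t+e^{-rt}\sigma(X_t)\,\mathrm{d}W_t.
\]
Substituting $\mathrm{d}W_t=\mathrm{d}W^{\mathbb{Q}}_t-\lambda_t\,\mathrm{d}t$ and using the definition \eqref{df_risk_premium_process}, namely $\sigma(X_t)\lambda_t=\mu(X_t)-rX_t$, the drift terms cancel exactly. This leaves
\[
\mathrm{d}U_t=e^{-rt}\sigma(X_t)\,\mathrm{d}W^{\mathbb{Q}}_t.
\]
The integrand is predictable and continuous in $t$ because $X$ has continuous paths. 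If $\sigma$ is continuous, it is therefore locally bounded, and in any case $\int_0^T\sigma(X_t)^2\,\mathrm{d}t<\infty$ almost surely since $X$ is a semimartingale solving \eqref{generic_market}. Hence $U$ is a continuous $(\mathbb{F},\mathbb{Q})$-local martingale, localized by the hitting times of the quadratic variation. So $\mathbb{Q}$ is an ELMM.

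Finally, Theorem \ref{FTAP} gives (NFLVR) from the existence of an ELMM.

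The only delicate point is the integrability needed to apply Girsanov and to define the stochastic integrals. I would handle it by noting that the martingale hypothesis on $Z$ already presupposes that $\lambda$ is $W$-integrable. No Novikov-type condition is needed, since martingality is assumed directly rather than derived.
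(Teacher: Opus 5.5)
Your argument is correct and follows the route the paper intends. The paper gives no proof of its own: it only invokes Girsanov's theorem and points to \cite[Sections 1.4--1.5]{karatzas1998methods}. Your steps fill in exactly that argument: positivity and unit mean of $Z_T$, the Girsanov shift $W^{\mathbb{Q}}_t=W_t+\int_0^t\lambda_s\,\mathrm{d}s$, cancellation of the drift of $U$ via $\sigma\lambda=\mu-rX$, and then the easy direction of Theorem \ref{FTAP}. Your handling of integrability is also adequate, namely the a.s. finiteness of $\int_0^T\lambda_s^2\,\mathrm{d}s$ and $\int_0^T\sigma(X_t)^2\,\mathrm{d}t$, and no Novikov condition since martingality is assumed.
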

Bessel processes provide classic examples of arbitrage arising from the absence of an ELMM \cite{DelbaenSchachermayer1995}, a property later used to construct arbitrage in the CEV model for $\gamma < 2$ \cite{Delbean}. As explored in \cite{Fontanaetal2014, RufRunggaldier2014}, markets failing (NFLVR) but satisfying (NUPBR) remain viable for pricing and hedging. Intuitively, this corresponds to the density process $Z$ being a strict local martingale rather than a true martingale. It should be emphasized that even when the state price density $Z$ is a $(\mathbb{F},\mathbb{P})$-martingale, the discounted asset price $U$ is only only guaranteed to be an $(\mathbb{F},\mathbb{P})$-local martingale. This distinction forms the mathematical foundation for the theory of asset price bubbles (see \cite{Cox,Heston,jarrow2007complete, jarrow2010incomplete}). Notably, in the CEV model, the discounted price becomes a strict local martingale when the volatility exponent satisfies $\gamma>2$. This was first noticed by \cite{EmanuelMacBeth1982}, directly checking that the process does not keep constant expectation over time. The results were later extended by \cite{MijatovicUrusov2012}.

\subsection{Connections of Arbitrage to  Spectral Properties}\label{sect_spectral_arbitrale}
In this section, we provide a heuristic link between arbitrage and spectral theory by deriving the risk-premium $\lambda$ as defined in \eqref{def_lambda} from an operator-theoretic perspective. This spectral approach is inspired by \cite{hansen2009long}, who analyze long-term pricing functionals by assuming the generator admits a strictly positive eigenfunction $x\mapsto \phi(x)>0$ with eigenvalue $\rho \in \mathbb{R}$. To this end, denote the infinitesimal generator associated with the process $X_t$ in \eqref{generic_market} by
\begin{equation}\label{generator}
    \mathcal{G}(f) := \mu(x)\frac{\partial f}{\partial x}+\frac{\sigma^2(x)}{2}\frac{\partial^2 f}{\partial x^2},
\end{equation}
acting on observables $f$. We define the process
\begin{equation}
M_t := e^{\rho t}\frac{\phi(X_t)}{\phi(X_0)}, \quad t\in [0,T]
\end{equation}
for a pair $(\rho,\phi)$ where $\phi$ is a strictly positive function and $\rho \in \mathbb{R}$\footnote{We changed the notation from \cite{hansen2009long} where $1/\phi$ is used in Corollary 6.1 instead of $\phi$. Since $\phi$ is assumed to be strictly positive, this distinction is purely notational. While we require $\phi>0$ to ensure a valid change of measure, the eigenvalue $\rho$ remains unrestricted in sign.}. This process serves simultaneously as a discount factor and a change-of-measure density. We will show in the following that under (NFLVR), the construction of $M_t$ admits a spectral interpretation and recovers the classical relation $
M_t = e^{-rt} Z_t$, $t\in [0,T]$.\\
We assume that (NFLVR) holds. This implies that the process $MB$ is a $(\mathbb{F},\mathbb{P})$-martingale and $MX$ a $(\mathbb{F},\mathbb{P})$-local martingale. For the former, the intuition is clear: the expected value of holding money over time must be constant and equal to $1$ up to discounting. Specifically, for
\begin{equation}
M_t B_t = e^{(\rho +r) t}\frac{\phi(X_t)}{\phi(X_0)}, \quad t\in [0,T],
\end{equation}
to satisfy $\mathbb{E}[M_t B_t]=1$, its drift must vanish almost surely. Applying Ito's formula and the generator $\mathcal{G}$ defined in \eqref{generator}, this condition can be written as 
\begin{equation}\label{eigenfunction_cond_1}
 \mathcal{G}\phi + (r+\rho)\phi=0,
\end{equation}
hence $\phi$ has to be an eigenfunction of $\mathcal{G}$. Ensuring that $MX $ is a $(\mathbb{F},\mathbb{P})$-local martingale is the counterpart of the existence of an ELMM. Since $M$ is a strictly positive density, vanishing drift for the process $M_t X_t = e^{\rho t}X_t \frac{\phi(X_t)}{\phi(X_0)}$ yields the condition
\begin{equation}\label{eigenfunction_cond_2}
\mathcal{G}(\phi x) + \rho \phi x =0.
\end{equation}
Together, \eqref{eigenfunction_cond_1} and \eqref{eigenfunction_cond_2} characterize no-arbitrage as a system of eigenvalue conditions for the operator $\mathcal{G}$. \\
Combining \eqref{eigenfunction_cond_1} and \eqref{eigenfunction_cond_2}, we obtain the following condition on $\phi$
$$
\frac{\mathcal{G}(\phi x)}{\phi x} - \frac{\mathcal{G}(x)}{\phi} = r.
$$
Using the definition of $\mathcal{G}$, this can be rewritten as
$$
\frac{\phi'}{\phi}(x) = \frac{\mathrm{d}}{\mathrm{d}x}\ln (\phi)(x) = \frac{rx - \mu(x)}{\sigma^2(x)}, \quad x\in \mathbb{R}.
$$
The eigenfunction $\phi$ and the risk-premium process $\lambda$ defined in \eqref{df_risk_premium_process} are thus related through the following formula:
\begin{equation}
 \frac{\mathrm{d}}{\mathrm{d}x}\ln (\phi)(x)=  - \frac{\lambda(x)}{\sigma(x)}, \quad x\in \mathbb{R}.
\end{equation}
We have thus recovered the existence of $\lambda$ through a purely spectral lens and provide an operator-theoretic basis for the necessary condition of (NFLVR) established in Proposition \ref{prop_nec_condition}.\\
We now complete the description of the process $M$ connecting it to the density process $Z$. Making use of \eqref{eigenfunction_cond_1} and \eqref{eigenfunction_cond_2}, Ito's formula yields
\begin{equation}
\mathrm{d}\ln(\phi(X_t)) = - \bigg[({\rho + e}) + \frac{1}{2}\lambda_t^2 \bigg]\mathrm{d}t - \lambda_t \mathrm{d}W_t, \quad t\in [0,T].
\end{equation}
Notably, recalling the definition of the process $Z$ in \eqref{df_risk_premium_process}, 
\begin{equation}
\phi(X_t) = \phi(X_0) e^{-(\rho + r)t}Z_t, \quad t\in [0,T].
\end{equation}
We have therefore established that the process $M$ fulfilling the roles of discounting and density simultaneously is $M=Z/B$. This result confirms that the spectral arguments of this section are consistent with the standard arbitrage theory.

\section{A Spectral Characterization of the CEV operator}\label{sec_spectrum_CEV}

The CEV model is a classical example of a local volatility model that incorporates stochastic volatility together with the leverage effect \cite{cox1996constant}. The risky asset $X$ solves the one-dimensional SDE
\begin{equation}\label{CEV}
    d X_t=\mu X_t \mathrm{d}t+\sigma X_t^{\frac{\gamma}{2}} \mathrm{d} W_t, \quad t \in[0, T], \quad X_0=x\geq0,
\end{equation}
where $\mu,\gamma, \sigma \in (0,+\infty)$ are the drift, elasticity and volatility parameters, while $x$ is the initial price. \\
We distinguish two fundamental parameter regimes of \eqref{CEV}, the \emph{bubble-free regime}  $\gamma<2$, characterized by sub-linear market elasticity, and the \emph{bubble regime} $\gamma>2$, which exhibits super-linear elasticity. At the boundaries, the model recovers the Black--Scholes model ($\gamma=2$), the affine Cox--Ingersoll--Ross model ($\gamma=1$) \cite{cox1985theory}, and arithmetic Brownian motion, i.e., the one-dimensional Ornstein--Uhlenbeck process ($\gamma=0$) \cite{risken1996fokker}. The parameter range $\gamma<1$ allows for the volatility of a stock to increases as its price falls and the leverage ratio increases accordingly, which is a feature commonly observed in equity markets \cite{yu2005leverage}. On the other hand, fo rthe parameter range $\gamma>1$, the volatility of the price tends to increase as its price increases and its leverage ratio decreases, which is typically observed in commodity markets \cite{geman2009modeling}. The case of $\gamma <0$ is excluded as unrealistic since volatility would vanish as prices rise, while assets close to zero tend to exhibit high relative volatility and possibly increased risk of default or structural regime changes \cite{fouque2000derivatives}. \\
The Fokker-Planck equation \cite{risken1996fokker} associated to system \eqref{CEV} is given by
\begin{equation}\label{FP_eq_CEV}
\frac{\partial p_X}{\partial t} (t, x)=-\frac{\partial}{\partial x}[\mu x p_X(t, x)]+\frac{\partial^2}{\partial x^2}\left[\frac{\sigma^2 x^{\gamma}}{2} p_X(t, x)\right],
\end{equation}
for the probability density function $p_X$ of the process $X$. The operator
\begin{equation}\label{defLx}
    \mathcal{L}_{\gamma}[p] := -\mu\frac{\partial}{\partial x}[ x p]+\frac{\partial^2}{\partial x^2}\left[\frac{\sigma^2 x^{\gamma}}{2} p\right]. 
\end{equation}
is called \emph{Fokker--Planck operator} (FP operator), while its Hermitian conjugate,
\begin{equation}
    \mathcal{G}_\gamma[f] = \mu x \frac{\partial f}{\partial x}+\frac{\sigma^2 }{2}x^{\gamma}\frac{\partial^2 f}{\partial x^2},
\end{equation}
with $\mathcal{G}_\gamma =  \mathcal{L}_\gamma^*$, is called \emph{generator}\footnote{In the pricing literature, the generator is often denoted as $\mathcal{L}$, while the FP operator is then given as $\mathcal{L}^*$. We follow the notation common in statistical physics, where the FP operator is denoted as $\mathcal{L}$ and the generator is its conjugate.}. The FP operator acts on probability densities, while the generator acts on observables. For a general discussion of one-dimensional diffusion processes, including details on the relation of the FP operator and the generator, we refer to \cite{feller1954diffusion}. The stationary density of $\mathcal{L}_\gamma$, corresponding to an eigenfunction with eigenvalue zero, is given explicitly by
\begin{equation}
    \pi(x) = x^{\gamma} e^{\nu x^{2-\gamma}},\quad x>0. 
\end{equation}
The multiplication transform $M_\pi[f](x) = \pi(x) f(x)$, mapping an observable to a density, relates the FP operator and the generator,
\begin{equation}
    \mathcal{G}_\gamma = M_\pi^{-1}\mathcal{L}_\gamma M_\pi. 
\end{equation}
The generator is symmetric on the space $L^2((0,\infty), \pi)$. 

\begin{remark}
The generator $\mathcal{L}_\gamma^*$ characterizes the infinitesimal evolution of expectations of sufficiently regular functionals of the underlying stochastic process and uniquely determines its associated Markov semigroup. More precisely, if $X$ denotes the process with generator $\mathcal{L}_\gamma^*$, the corresponding semigroup $P = (P_t)_{t\in [0,T]}$, defined by $P_t f(x)=\mathbb{E}_x[f(X_t)]$, satisfies the Kolmogorov backward equation $\partial_t u=\mathcal{L}_\gamma^* u$ with initial condition $u(0,\cdot)=f$. Conversely, under suitable domain and growth conditions, the semigroup can be recovered from the generator via the exponential formula $P_t=\exp(t\mathcal{L}_\gamma^*)$, establishing a one-to-one correspondence between the stochastic dynamics, the generator, and the associated transition probabilities.
\end{remark}
The second-order differential operator \eqref{defLx} is an example of a Sturm--Liouville  (SL) operator \cite{zettl2005sturm,teschl2012ordinary}, whose spectral theory is well-developed. Indeed, SL theory provides a natural setting for the eigenfunction analysis of \eqref{defLx} by defining a weighted Hilbert space on which $\mathcal{L}_\gamma$ is self-adjoint. General results concerning SL theory on weighted spaces are summarized in Appendix \ref{app:generic_SL}. Additionally, we denote the probability flux associated with the Fokker--Planck equation \eqref{defLx} as $F$.\\
Written out explicitly, the CEV operator \eqref{defLx} reads
\begin{equation}\label{Lexpl}
\mathcal{L}_{\gamma}[p] =  \left[\frac{\sigma^2}{2}\gamma(\gamma-1)x^{\gamma -2}-\mu\right] p+\left[ \sigma^2 \gamma xx^{\gamma -2}-\mu x\right]\frac{\partial p}{\partial x}(x) + \frac{\sigma^2}{2}x^\gamma \frac{\partial ^2 p}{\partial x^2}, 
\end{equation}
which will be the basis for our further analysis. 
\begin{remark}
For $\gamma=2$, the Fokker--Planck operator takes the form 
\begin{equation}\label{L2}
    \mathcal{L}_{2}[p] = -\mu\frac{\partial}{\partial x}[ x p]+\frac{\partial^2}{\partial x^2}\left[\frac{\sigma^2 x^2}{2} p\right],
\end{equation}
the well-known Black--Scholes model. Log-normal coordinates 
\begin{equation}
    y = \log x,
\end{equation}
transform equation \eqref{L2} to the following second-order differential operator with constant coefficients, 
\begin{equation}
\tilde{\mathcal{L}}_2[p] = (\sigma^2-\mu)p+\left( \frac{3}{2}\sigma^2-\mu\right)\frac{\partial p}{\partial y }+ \frac{\sigma^2}{2}\frac{\partial^2 p}{\partial y^2},
\end{equation}
which can be readily integrated. 
\end{remark}
To ease notation, we define the following set of parameters:
\begin{equation}
    \begin{split}
        \beta & =2-\gamma, \quad 
\alpha  =\frac{3-\gamma}{2 \mu} \sigma^2 \\
\eta & =\frac{1}{2} \beta^2 \sigma^2, \quad 
\nu =-\frac{\beta\mu}{\eta}.
    \end{split}
\end{equation}
It is immediate to verify that
\begin{equation}
    \nu = \frac{2\mu}{(\gamma-2)\sigma^2},
\end{equation}
and consequently, 
\begin{equation}
    \begin{split}
          \gamma <2 \iff \nu < 0,\\
          \gamma >2 \iff \nu > 0.
    \end{split}
\end{equation}
For the parameter range $\gamma \in (2,+\infty)$, we assume the boundary condition
\begin{equation}\label{bc_gamma>2}
\lim_{x\to +\infty}  -\frac{p(x)x^{\gamma-1}}{\gamma-2 } + \theta \frac{\nu^{\frac{1}{2-\gamma}} }{\eta} x\left[F(p(x))+\frac{\sigma^2}{2}p(x)x^{\gamma-1}\right]=0
\end{equation}
where $\theta \in \overline{\mathbb{R}}$. The case $\theta =\infty$ is of particular interest as it allows for an explicit eigenfunction representation via generalized Laguerre polynomials. Under this choice, \eqref{bc_gamma>2} reduces to
\begin{equation}
\lim_{x\to +\infty}   x\left[F(p(x))+\frac{\sigma^2}{2}p(x)x^{\gamma-1}\right]=0
\end{equation}
Similarly, $\gamma \in [0,1)$, we impose the condition
\begin{equation}\label{bc_gamma_1_2}
    \lim_{x\to 0}  \frac{\gamma}{\gamma-2 }p(x)x^{\gamma-1} + \frac{x^{\gamma}}{2-\gamma}\frac{\partial p}{\partial x}(x) - \frac{\theta |\nu|^{\frac{1}{2-\gamma}}}{(2-\gamma)^2}x^{\gamma} \left[x\frac{\partial p}{\partial x}(x)+ (\gamma-1)p(x)\right]=0.
\end{equation}
For $\theta = \infty$, this simplifies to
\begin{equation}
\lim_{x\to 0} x^\gamma \left[x p'(x) + (\gamma-1)p(x)\right] = 0.
\end{equation}
The necessity and formal derivation of these conditions follow from the spectral analysis of the Laguerre operator; see Appendices \ref{sec:spectrum_Laguerre} and \ref{app:boundary_conditions} for details.

\subsection{The Normal Form of the CEV Operator}

Since prices can only be positive, we consider the operator \eqref{defLx} on the positive half-line, i.e., $I=(0,\infty)$. Let us first rewrite the Fokker--Planck operator for the CEV model in Sturm--Liouville normal form. Applying Lemma \eqref{LemmaNF} to the expanded form of the CEV operator in \eqref{Lexpl} we find that
\begin{equation}
    \begin{split}
        l_0(x) & = \frac{\sigma^2}{2}\gamma(\gamma-1)x^{\gamma -2}-\mu,\\
        l_1(x) & =  \sigma^2 \gamma x^{\gamma -1}-\mu x,\\
    l_2(x) & = \frac{\sigma^2}{2}x^\gamma ,
    \end{split}
\end{equation}
and consequently 
\begin{equation}
    \begin{split}
        \int_{x_0}^x \frac{l_1(\xi)}{l_2(\xi)}\, d\xi & = \int_{x_0}^x \frac{\sigma^2\gamma \xi|\xi|^{\gamma-2}-\mu \xi}{\frac{\sigma^2}{2}|\xi|^\gamma}\, d\xi\\
        & = 2\gamma \log x - \frac{2\mu}{\sigma^2(2-\gamma)} x^{2-\gamma} + C,
    \end{split}
\end{equation}
whenever $\gamma\neq 2$, for some $C\in\mathbb{R}$, which gives the weight function 
\begin{equation}\label{eq_wei_x}
    \omega_\gamma(x) =  x^\gamma \exp\left( \nu x^{2-\gamma} \right),
\end{equation}
and the functions
\begin{equation}\label{Q0Q1CEV}
\begin{split}
    Q_0(x) & = \omega_\gamma(x) \left[\frac{\sigma^2}{2}\gamma(\gamma-1)x^{\gamma -2}-\mu\right],\\
    Q_2(x) & = \frac{\sigma^2}{2}x^{\gamma} \omega_\gamma(x). 
    \end{split}
\end{equation}

\begin{figure}
    \centering
    \includegraphics[width=0.45\linewidth]{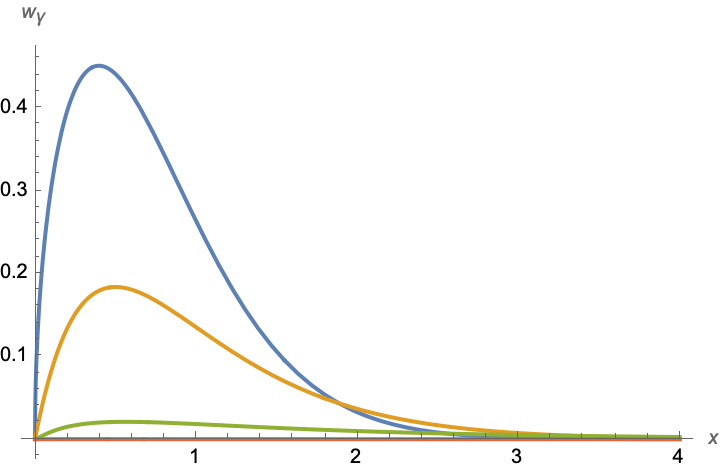}
\includegraphics[width=0.45\linewidth]{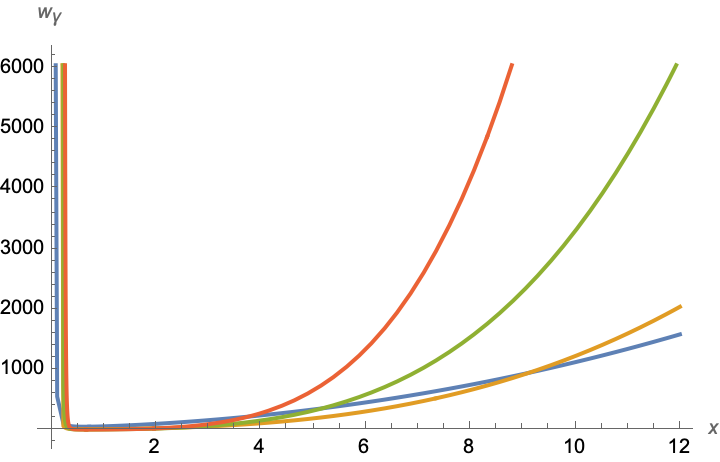}
    \caption{Weight function $w_\gamma$ for different values of $\gamma$: $\gamma = 0.5$ (left,blue), $\gamma = 1$ (left, orange), $\gamma = 1.5$ (left, green), $\gamma = 1.8$ (left, red), $\gamma = 2.5 $ (right, blue), $\gamma = 3$ (right, orange), $\gamma = 3.5 $ (right, green), $\gamma = 4$ (right, red). }
    \label{weightplot}
\end{figure}

\begin{remark}\label{remk:bc_0}
We emphasize that the the weight function $w_\gamma$ vanishes at the boundary $x=0$ for $\gamma<2$, while it becomes unbounded for $\gamma>2$, see Figure \ref{weightplot} for plots of $w_\gamma$ for varying elasticities. In particular, for $\gamma>2$, the weight function has an essential singularity at the origin and any element $p\in L^2(0,\infty,w_\gamma)$ necessarily satisfies $p(0)=0$. This is consistent with the observation that for any $t\in [0,T]$ and $x>0$, the state $X_t =0$ is never attainable for $\gamma >2$, up to null sets. This statement builds upon the connection of the CEV model and Bessel processes, as done in [\cite{Delbean}, Remark 3.7]. 
\end{remark}

\begin{remark}
For $\gamma = 2$ (Black--Scholes model), we have that
\begin{equation}\label{w2}
    w_2(x) = x^q,\quad q= \frac{2(\sigma^2-\mu)}{\sigma^2},
\end{equation}
and consequently
 \begin{equation}
     Q_2(x) = \frac{\sigma^2}{2}x^{\frac{4\sigma^2-2\mu}{\sigma^2}},
 \end{equation}
 which is zero at the origin for $2\sigma^2>\mu$, implying that the origin is l.p. in that case. The properties of the Black--Scholes model as a SL operator thus depend on the relative magnitude of $\sigma$ and $\mu$. 
\end{remark}


\subsection{Transformation of the CEV Fokker--Planck Operator to the Generalized Laguerre Operator}

By Ito's Lemma, the change of coordinates $f:\mathbb{R}^+\to\mathbb{R}^+$, $f(X) = |\nu| X^{2-\gamma}$, 
\begin{equation}\label{trans}
    Y_t = |\nu|X_t^{2-\gamma}, 
\end{equation}
transforms the SDE \eqref{CEV} into 
\begin{equation}\label{eqy}
     d Y_t=(2-\gamma)\Big[\mu Y_t +\frac{|\nu|}{2}(1-\gamma)\sigma^2\Big] \mathrm{d}t+(2-\gamma)\sigma \sqrt{|\nu| Y_t} \mathrm{d} W_t, \quad t \in[0, T], \quad Y_0=|\nu|x_0^{2-\gamma}\geq 0
\end{equation}
The Fokker--Planck equation associated to \eqref{eqy} reads
\begin{equation}\label{FP_eq_CEV_mod}
    \frac{\partial p_Y}{\partial t}=   \bar{\mathcal{L}}_\gamma[p_Y] , 
\end{equation}
for the operator
\begin{equation}\label{Lbar}
    \bar{\mathcal{L}}_\gamma[p_Y] = \eta |\nu|  \left[ y \frac{\partial^2 p_Y}{\partial y^2} +  \left(1+ \frac{1}{2-\gamma} + \sign(\nu)y\right)\frac{\partial p_Y}{\partial y}+  \sign(\nu)p_Y\right]. 
\end{equation}
The densities transform according to
\begin{equation}\label{pxpy}
    p_X(x)= |\nu|^\frac{1}{2-\gamma}(2-\gamma) y^{\frac{1-\gamma}{2-\gamma}} p_Y(y),
\end{equation}
This follows from transformation of the integration element,
\begin{equation}\label{dy_trans_dx}
    dy = |\nu|(2-\gamma)x^{1-\gamma}dx = |\nu|(2-\gamma) \bigg(\frac{y}{|\nu|}\bigg)^\frac{1-\gamma}{2-\gamma}dx.
\end{equation}
Detailed calculations showing how the density transformation \eqref{pxpy} induces the FP equation in \eqref{FP_eq_CEV_mod} are provided in Appendix \ref{checktransform}. Given \eqref{eq_wei_x}, the weight $w^Y_\gamma$ takes the following explicit form (up to a multiplicative factor):
\begin{equation}
    w^Y_\gamma(y) \sim  y^\frac{1}{2-\gamma} \exp\left( \sign(\nu) y \right).
\end{equation}
At the operator level, the coordinate change \eqref{trans} is realized via the linear mapping:
\begin{equation}
    U[p](x) = A\,x^{B}p(Cx^{D}), \qquad A\neq0,\; B\in\mathbb{R},\, C>0,\; D\in \mathbb{R}/\{0\}. 
\end{equation}
The inverse is given by 
\begin{equation}\label{Uinv}
U^{-1}[q](y)=\frac{1}{A}\left(\frac{y}{C}\right)^{-B/D}
\,q\!\left(\left(\frac{y}{C}\right)^{1/D}\right).
\end{equation}
To show that $U$ is a unitary operator from $L^2((0,\infty),w^Y_\gamma)$ to $L^2((0,\infty),w_\gamma)$, we verify it is an isometry: for any \(p\in L^2((0,\infty),w^Y_\gamma)\),
\begin{align*}
\|Up\|_{L^2((0,+\infty),w_\gamma)}^2
&=\int_0^\infty |Up(x)|^2  w_\gamma(x)\,dx \\
&=\int_0^\infty |A|^2 x^{2B}|p(Cx^D)|^2
\frac{|D|C}{|A|^2}x^{D-2B-1}w^Y_\gamma(Cx^D)\,dx \\
&=\int_0^\infty |D|C\,x^{D-1}|p(Cx^D)|^2 w^Y_\gamma(Cx^D)\,dx .
\end{align*}
With the substitution \(y=Cx^D\), we have \(dy=DC\,x^{D-1}dx\). Hence,
\begin{equation}
\|Up\|_{L^2((0,+\infty),w_\gamma)}^2
=\int_0^\infty |p(y)|^2 w(y)\,dy
=\|p\|_{L^2((0,+\infty),w^Y_\gamma)}^2.
\end{equation}
Notice that for $D<0$ the change of variables results in a flip of the boundaries, which is accounted for by the term $|D|$ in the weight. Thus \(U\) is an isometry and, because of \eqref{Uinv}, a unitary operator. Choosing the parameters: 
\begin{equation}
    A = |\nu|(2-\gamma),\quad B = 1-\gamma, \quad C = |\nu|, \quad D = (2-\gamma).
\end{equation}
we recover 
\begin{equation}
    p_X(x) =  U[p_Y](x) = |\nu|(2-\gamma) x^{1-\gamma} p_Y(|\nu|x^{2-\gamma})= |\nu|^\frac{1}{2-\gamma}(2-\gamma) y^{\frac{1-\gamma}{2-\gamma}} p_Y(y),
\end{equation}
consistent with the Ito-coordinate change \eqref{trans} and the transformation law \eqref{pxpy}. \\

Since the operator $U$ is unitary, the operator 
\begin{equation}\label{op_y}
   \bar{\mathcal{L}}_\gamma [p_Y]= U^{-1}\mathcal{L}_\gamma U[p_Y],
\end{equation}
corresponding to the right-hand side of \eqref{eqy} is iso-spectral to $\mathcal{L}_\gamma$, i.e., $\sigma(\mathcal{L}_\gamma) = \sigma(\bar{\mathcal{L}}_\gamma)$. 
For the case $\gamma >2$ ($\nu >0$), we apply yet another transform, a simple multiplication,
\begin{equation}\label{defM}
    p_Y(y) = M[g](y) = e^{-y}g(y),
\end{equation}
which is well-defined as an operator $M: L^2(0,\infty,e^{-2y}w^Y_\gamma)\to L^2(0,\infty,w^Y_\gamma)$. Note that \eqref{defM} is not unitary, but invertible, thus guaranteeing that $M^{-1}\mathcal{L}_\gamma M$ exists and that it is iso-spectral to $\mathcal{L}_\gamma$. An easy calculation shows that $\bar{\mathcal{L}}_\gamma$ is transformed to 
\begin{equation}\label{L_bar_M}
   (M^{-1}\bar{\mathcal{L}}_\gamma M)[g] = (\gamma-2)\mu \left[ y \frac{\partial^2 g}{\partial y^2} +  \left(1 + \frac{1}{2-\gamma} -y \right)\frac{\partial g}{\partial y} + \frac{1}{\gamma-2}g\right],
\end{equation}
The spectral properties of the original $\mathcal{L}_\gamma$-operator are therefore equivalent to the spectral properties of $\bar{\mathcal{L}}_\gamma$ and  $(M^{-1}\bar{\mathcal{L}}_\gamma M)$, respectively. The new weight behaves asymptotically as
\begin{equation}  e^{-2y}w_\gamma^Y(y)\sim  y^\frac{1}{2-\gamma} \exp\left( -y \right), \quad \gamma >2.
\end{equation}
We conclude this section summarizing the mapping of the spectral components. The eigenfunctions of the operator $\mathcal{L}_\gamma$ are related to those of the auxiliary operators \eqref{Lbar} and \eqref{L_bar_M} via the following transformations: 
\begin{align}
    p_X(x) &= -\nu(2-\gamma) x^{1-\gamma} p_Y(-\nu x^{2-\gamma}), \quad \gamma <2; \label{eq:pxpy_gamma<2}\\
    p_X(x) &= \nu(2-\gamma) x^{1-\gamma}e^{-\nu x^{2-\gamma}} g(\nu x^{2-\gamma}), \quad \gamma >2, \label{eq:pxpy_gamma>2}
\end{align}
where $p_Y$ and $g$ denote the eigenfunctions of the operators, respectively. Furthermore, it is immediate to see that \eqref{Lbar} for $\gamma <2$ and \eqref{L_bar_M} for $\gamma>2$ correspond precisely to the generalized Laguerre operator, defined in \eqref{defLa} and whose spectral theory is discussed in details in Appendix \ref{sec:spectrum_Laguerre}, once we set \begin{equation}\label{connection_a_gamma}
    a = \frac{1}{2-\gamma}.
\end{equation}
Hence, the spectrum of $\mathcal{L}_\gamma$ is determined by the spectrum of the Laguerre operator $\mathbf{L}_a$ as follows:
\begin{align}
    \sigma(\mathcal{L}_\gamma) &= \{\mu(2-\gamma)(\Lambda -1)\}_{\Lambda \in \sigma\left(\mathbf{L}_{\frac{1}{2-\gamma}}\right)}, \quad \gamma <2; \label{eq:sigma_gamma<2}\\
     \sigma(\mathcal{L}_\gamma) &= \left\{\mu(\gamma-2)\left(\Lambda +\frac{1}{\gamma-2}\right)\right\}_{\Lambda \in \sigma\left(\mathbf{L}_{\frac{1}{2-\gamma}}\right)}, \quad \gamma >2,\label{eq:sigma_gamma>2}
\end{align}

\begin{figure}
    \centering
    \includegraphics[width=0.8\linewidth]{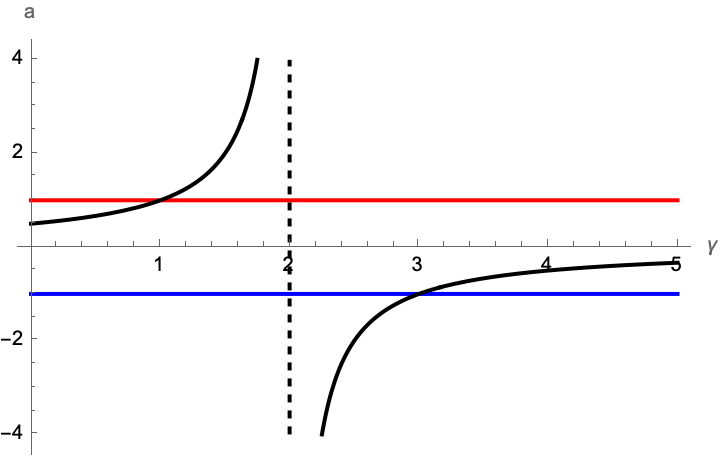}
    \caption{Behavior of the parameter $a = \frac{1}{2-\gamma}$ for $\gamma >0$. Red ($a=1$) and blue ($a=-1$) horizontal lines indicate the critical thresholds for the spectrum of the Laguerre operator $\mathbf{L}_{a}$ (see Theorem \ref{thm:spectru_Laguerre}).}
    \label{fig_pa}
\end{figure}

\subsection{Eigenfunctions and Boundary Conditions of the CEV Operator} In view of Theorem \ref{thm:spectru_Laguerre}, together with equation \eqref{eq:pxpy_gamma<2}, \eqref{eq:pxpy_gamma>2}, \eqref{eq:sigma_gamma<2} and \eqref{eq:sigma_gamma>2}, the spectrum and boundary conditions of the CEV operator can be readily determined. The detailed calculations of the transformation from the boundary conditions for \eqref{defLa} to the ones for \eqref{Lexpl} are deferred to Appendix \ref{app:boundary_conditions}. We distinguish various cases.\\

\noindent
\underline{$0\le\gamma < 1$ ($1/2\le a<1$)}:\\ At the boundaries, $\infty$ is a limit point, while $0$ is a limit circle. The boundary conditions necessary for a self-adjoint extension are characterized by \eqref{bc_gamma_1_2}. For the case where $\theta \in \mathbb{R}$, the spectrum is determined by the zeros of \eqref{eq:spec_theta_R} in conjunction with \eqref{eq:sigma_gamma<2}. For the self-adjoint extension $\theta = \infty$, the eigenvalues and eigenfunctions are provided explicitly in \eqref{eq:sp_theta_infty} and \eqref{eq:eigen_theta_infty}, respectively.\\

\noindent
\underline{$1\le\gamma < 2$ ($1\le a<\infty$)}:\\ Both $\infty$ and $0$ are limit point, and the CEV operator $\mathcal{L}_\gamma$ is self-adjoint on $L^2((0,\infty),w_\gamma)$. Its spectrum is discrete and is purely discrete, stable and given explicitly as 
\begin{equation}\label{eq:sp_theta_infty}
        \sigma(\mathcal{L}_\gamma) = \{-\mu(2-\gamma)(n+1)\}_{n\in \mathbb{N}}.
    \end{equation}
The corresponding eigenfunctions $\{p_n\}_{n\in \mathbb{N}} \subset L^2((0,\infty),w_\gamma)$ are given by the generalized Laguerre polynomials, pulled back to the $x$-variables, 
\begin{equation}\label{eq:eigen_theta_infty}
    p_n(x)  = x^{1-\gamma}L_n^\frac{1}{2-\gamma}(-\nu x^{2-\gamma}), \quad n\in\mathbb{N}.
\end{equation}

\medskip
\noindent
\underline{$\gamma> 2$ ($a<0$)}:\\
 The boundary at $0$ is a limit point, while the boundary at $\infty$ is a limit circle. The self-adjoint extension is characterized by the boundary condition \eqref{bc_gamma>2}. In the case where $\theta \in \mathbb{R}$, the spectrum is determined by the zeros of \eqref{eq:spec_theta_R} in conjunction with \eqref{eq:sigma_gamma>2}. For the self-adjoint extension $\theta = \infty$, the spectrum is given explicitly as 
 \begin{equation}\label{specLgam_ge_2}
    \sigma(\mathcal{L}_\gamma) =\left\{\mu-n\mu(\gamma-2)\right\}_{n\in \mathbb{N}}.
\end{equation}
Notably, the spectrum is purely discrete and consists of a finite number of positive eigenvalues. These correspond to each $n\in\mathbb{N}$ satisfying the condition $n \leq \frac{1}{\gamma-2}$. The associated eigenfunctions are readily computed and take the following form
\begin{equation}\label{eigenfunc_ge_2}
    p_n(x) =  x^{1-\gamma} e^{-\nu x^{2-\gamma}}L_n^\frac{1}{2-\gamma}(\nu x^{2-\gamma}), \quad n \in \mathbb{N}.
\end{equation}
We confirm  that $p_n(0)=0$ for any $n \in \mathbb{N}$, thus showing consistency with the discussion in Remark \eqref{remk:bc_0}. Notably, for $\gamma >3$, there exists only one positive eigenvalue $\lambda_0$, with
\begin{equation}
    \lambda_0 = \mu \in \mathbb{R}_+.
\end{equation}

\begin{remark}
    In accordance with the discussion following Theorem \ref{thm:spectru_Laguerre}, we highlight that for $\gamma \in (2,3]$ the eigenfunctions $\{p_n\}_{n\in \mathbb{N}}$ do not belong to the Hilber space $L^2((0,\infty),w_\gamma)$ because
\begin{equation}
p_n^2(x) w_\gamma(x) \underset{x \to +\infty}{\sim} x^{2-\gamma}.
\end{equation}
\end{remark}

\subsection{Integrability Properties of Eigenfunctions of the CEV Operator}

An eigenfunction to the CEV operator defines a probability density if it is non-negative and integrable. Under the transformation $X\to Y$, integrability  is preserved as follows: For $\gamma<2$, we have that
\begin{align}
    \int_0^\infty p_X(x)\, dx &= \int_0^\infty p_Y(y) \, dy <\infty, \quad \gamma <2,\\
    \int_0^\infty p_X(x)\, dx & =\int_0^\infty p_Y(y) \, dy = 
\int_0^\infty e^{-y} g(y) \, dy <\infty, \quad \gamma >2.
\end{align}
In both regimes, the integrand on the right-hand side corresponds to an eigenfunction of the generalized Laguerre operator \eqref{defLa}.\\

\noindent
\underline{$1\le\gamma < 2$ ($1\le a<\infty$)}:\\ The only eigenfunctions of the operator are the generalized Laguerre polynomials. Since these are non-null polynomials, they are clearly not integrable on $(0,+\infty)$. Consequently, the CEV operator possesses no integrable eigenfunctions in this parametric domain.\\

\noindent
\underline{$0<\gamma < 1$ ($1/2<a<1$)}:\\
For $\Lambda \neq -n$, $n\in \mathbb{N}$ (or equivalently, when the eigenfunction is associated with a self-adjoint extension $\theta \in \mathbb{R}$), the exponential growth of $\Phi$ at infinity excludes it as a potential eigenfunction, thus implying $C_1=0$ in \eqref{sol_kummer}. The asymptotic behaviors described in \eqref{eq:as_phi_Psi_inf} and \eqref{eq:as_Psi_0} then restrict the eigenvalue to $\Lambda >1$ to ensure the integrability of $\Psi$ on $(0,+\infty)$. Under these conditions, the boundary condition \eqref{eq:self_adjoint_condition} takes the form 
\begin{equation}\label{eq:theta_gamma}
    \frac{\Gamma(-a)}{\Gamma(\Lambda-a)} + \theta \frac{\Gamma(1+a)}{\Gamma(\Lambda)} =0, 
\end{equation}
which, for any $\Lambda >1$, admits a unique solution $\theta= \theta(\Lambda)$, determining a unique self-adjoint extension. It can be readily verified from \eqref{eq:spec_theta_R} that $\lambda$ belongs to the spectrum of the extension associated with $\theta= \theta(\Lambda)$. By \eqref{eq:sigma_gamma<2}, the corresponding eigenvalue of $\mathcal{L}_\gamma$ is 
\begin{equation}
    \lambda = \mu(2-\gamma)(\Lambda -1) >0,
\end{equation}
associated to the eigenfunction
\begin{equation}
    p^c_\lambda(x) =  c|\nu| (2-\gamma) x^{1-\gamma} \Psi\bigg(\Lambda, 1+ \frac{1}{2-\gamma}, |\nu| x^{2-\gamma}\bigg),
\end{equation}
$c\in \mathbb{R}/\{0\}$. The parameter range ensures the non-negativity of $\Psi$, allowing $p^c_\lambda$ to define a valid density for any $c>0$. The asymptotic behavior of the eigenfunction is characterized by:
\begin{equation}
p^c_\lambda(x) \sim K_1x^{-\gamma} \text{ as } x \to 0, \quad p^c_\lambda(x) \sim K_2x^{(2-\gamma)(1-\Lambda) - 1} \text{ as } x \to \infty,
\end{equation}
where $(K_1,K_2)$ are positive constants. Hence, for any $\lambda >0$, there exists a unique self-adjoint extension $\theta(\Lambda)\in \mathbb{R}$ such that $\lambda$ belongs to its spectrum and the associated eigenfunction $p^c_\lambda$ defines a valid density on $(0,+\infty)$. 
\begin{remark}
    We emphasize that a defining property of the eigenfunction $p^c_\lambda$ is the accumulation of mass at the origin $x=0$. This behavior is consistent with the boundary classification for $\gamma<2$, where the $0$-boundary is not only attainable but serves as an absorbing state for the underlying process $X$.
\end{remark}

Turning to the extension $\theta=\infty$, which requires separate treatment, the same reasoning for $\gamma \in [1,2)$ holds. This excludes the existence of integrable eigenfunctions for this specific self-adjoint extension.\\

\noindent
\underline{$\gamma >2$ ($a<0$)}:\\
We first consider the extension $\theta = \infty$. Given $\nu>0$ in this regime, the exponential term in \eqref{eigenfunc_ge_2} ensures integrability at the origin for all $n\in \mathbb{N}$. Conversely, as $x\to \infty$, the eigenfunctions exhibit the asymptotic decay  $ p_n(x) \sim x^{1-\gamma}$, which is sufficient to guarantee integrability for $\gamma>2$. Furthermore, according to the spectral relation \eqref{specLgam_ge_2}, the positivity of the eigenvalue $\lambda$ is equivalent to the condition $n \le \frac{1}{\gamma-2}$. \\

For the self-adjoint extensions $\theta \in \mathbb{R}$, the requirement $\lambda \geq 0$ $C_1=0$ in \eqref{sol_kummer}, due to the asymptotic growth of $\Phi$ at infinity \eqref{eq:as_phi_Psi_inf}.  Consequently, for each eigenvalue 
\begin{equation}
\lambda = \mu(\gamma-2)\left(\Lambda +\frac{1}{\gamma-2}\right)\geq 0,
\end{equation}
or equivalently $\Lambda \geq -\frac{1}{\gamma-2}$, there exists a unique self-adjoint extension $\theta = \theta(\Lambda)$ defined by \eqref{eq:theta_gamma}, for which $\lambda$ belongs to its spectrum. The associated eigenfunction, 
\begin{equation}
    p^c_\lambda(x) =  c|\nu| (2-\gamma) x^{1-\gamma} e^{-|\nu|x^{2-\gamma}} \Psi\bigg(\Lambda, 1+ \frac{1}{2-\gamma}, |\nu| x^{2-\gamma}\bigg),
\end{equation}
defines a density for any $c<0$ and possesses infinitely many moments. Furthermore, we observe that
\begin{equation}
    \lim_{x\to 0}p^c_\lambda(x) = \lim_{x\to +\infty}p^c_\lambda(x) =0,
\end{equation}
in accordance with Remark \ref{remk:bc_0}. On the contrary, for $\lambda <0$, it is not needed to set $C_1=0$. However, each associated eigenfunction $p_\lambda$ would still satisfy $\lim_{x\to 0}p_\lambda(x) =0$.

\section{Arbitrage Theory of the CEV Model}\label{sec:arbitrage_CEV}
\subsection{Probabilistic arbitrage theory}
In this section, we recall the classical arbitrage theory of the CEV model. The analysis was carried out in a high level of details in \cite{Delbean}, while \cite{MijatovicUrusov2012} discussed how some different notions of arbitrage may actually hold for $\gamma > 2$, due to the loss of the martingale property in the discounted asset price (\cite{EmanuelMacBeth1982,CarrChernyUrusov2007_Preprint}). We again distinguish various regimes depending on the parameter $\gamma$. The case $\gamma<2$ was treated in \cite{Delbean}, where a relation to a squared Bessel process with dimension \eqref{defdelta} was exploited. For the CEV model with $\gamma>2$, we refer to \cite{MijatovicUrusov2012}, and in particular Example 2.8. For $\gamma=2$, the CEV model reduces to the well-known Black-Scholes model. In the original paper \cite{black1973pricing}, the authors derived their model from no-arbitrage assumptions. \\

We consider the market dynamics defined in \eqref{generic_market}, where the spot price $X$ evolves according to the CEV process specified in \eqref{CEV}. The classical arbitrage theory of the CEV model is closely linked to Bessel processes, whose dimension we denote as 
\begin{equation}\label{defdelta}
    \delta = 2\frac{1-\gamma}{2-\gamma}. 
\end{equation}
Let $X_t^\delta$ be the Bessel process of dimension $\delta$, satisfying the dynamics
\begin{equation}
    \mathrm{d}X_t^\delta = \delta \mathrm{d}t + 2 \sqrt{X^\delta_t}\mathrm{d}W_t, \quad X^\delta_0 = x^\frac{2}{2-\delta},
\end{equation}
and define $\zeta$ the first passage time of $X^\delta$ in $0$, i.e., 
\begin{equation}
\tau=\inf \left\{t>0 ; X_t^{(\delta)}=0\right\},
\end{equation}
Following the results of [\cite{Delbean}, Section 2], the first passage time for the process $X$ is given by 
\begin{equation}
\begin{aligned}
&\zeta=\frac{1}{\mu(2-\gamma)} \log \left(\frac{\sigma^2(1-\gamma/2)}{\sigma^2(1-\gamma/2)-2 \mu \zeta}\right) \text { on the set }\left\{\zeta<\frac{\sigma^2(2-\gamma)}{4 \mu}\right\}
\end{aligned}
\end{equation}
\\
 and by
 \begin{equation}
 \zeta=+\infty \text { on the set }\left\{\zeta \geq \frac{\sigma^2(2-\gamma)}{4 \mu}\right\}.
 \end{equation}
Notably the boundary $0$ is never attainable for $\gamma\geq2$ (see [\cite{Delbean}, Remark 2.1]). On the contrary, for $\gamma <2 $ the boundary $X=0$ is attainable and absorbing, so that $X$ hits zero with positive probability for any $t>0$. $X$ remains equal to zero as soon as it touches the boundary. Following Proposition \ref{prop_suff_cond}, the change of variables is defined via
\begin{align*}
   Z_t &= \exp\left(-\frac{\mu -r}{\sigma}\int_0^t X_s^{1- \gamma/2} \,\mathrm{d}W_s - \frac{1}{2}\bigg(\frac{\mu -r}{\sigma}\bigg)^2\int_0^t X_s^{2-\gamma}\mathrm{d}s\right),\\
   &= \exp\left(-\frac{\mu -r}{\sigma}\int_0^{t \wedge \zeta} X_s^{1- \gamma/2} \,\mathrm{d}W_s - \frac{1}{2}\bigg(\frac{\mu -r}{\sigma}\bigg)^2\int_0^{t \wedge \zeta} X_s^{2-\gamma}\mathrm{d}s\right),\quad t \in [0,T] 
\end{align*}
For any $T <+\infty$, the process $\{Z_t\}_{t\in [0,T]}$ is a $(\mathbb{F},\mathbb{P})$-martingale ([\cite{Delbean}, Theorem 2.3), and $Z_T$ acts as density for the unique change of measure such that 
\begin{equation}
Z_T = \frac{\mathrm{d}\mathbb{Q}}{\mathrm{d}\mathbb{P}} \quad \text{on } \mathcal{F}_T.
\end{equation}
We highlight that some care is needed for $T=+\infty$: \cite{MijatovicUrusov2012} point out that $Z$ may fail to be a martingale on the time horizon $[0,+\infty]$ (which we however do not consider). Under $\mathbb{Q}$, the dynamics of $X$ and $U$ are
\begin{equation}\label{discounted_dyn}
\mathrm{d} X_t  =r X_t \mathrm{d} t+\sigma X_t^\frac{\gamma}{2} \mathrm{d} \tilde{W}_t, \; X_0=x, \quad
\mathrm{d} U_t  =\sigma e^{-r(1-\frac{\gamma}{2})t} U_t^\frac{\gamma}{2} \mathrm{d} \tilde{W}_t, \; U_0=x,
\end{equation}
where $\tilde{W}$ is an $(\mathbb{F},\mathbb{Q})$-Brownian motion defined by
\begin{equation}
\tilde{W}_t = W_t + \int_0^t\frac{\mu-r}{\sigma} X^{1-\frac{\gamma}{2}}\mathrm{d}s, \quad t\in [0,T].
\end{equation}
There exists an ELMM $\mathbb{Q}$ and, by Theorem \ref{FTAP}, the (NFLVR) condition holds. We proceed distinguishing different cases. 

\begin{itemize}
\item[$(i)$] \underline{\textbf{$\gamma=2$.}}\\
The CEV model reduces to classical Geometric Brownian Motion The change of variable previously defined is the actually the "classical" one employed in the Black-Scholes model.\\

\item[$(ii)$] \underline{\textbf{$\gamma<2$, or $\delta\in(-\infty,1)$.}}\\
While (NFLVR) holds, constraining $X$ to remain strictly positive generates arbitrage opportunities. Notably, a new probability measure $\mathbb{M}$ can be introduced, which is equivalent to the conditional measure $\mathbb{P}[\cdot\mid X_T>0]$ on $\mathcal{F}_T$. Defining the auxiliary process $\hat{X}$, whose $\mathbb{Q}$-dynamics are
\begin{equation}\label{mod_dyn}
\mathrm{d} \hat{X}_t =[r \hat{X}_t +\sigma^2 \hat{X}_t^{\gamma-1}]\mathrm{d} t+\sigma \hat{X}_t^\frac{\gamma}{2} \mathrm{d} \tilde{W}_t, \quad \hat{X}_0=x>0,
\end{equation}
it is shown in [\cite{Delbean}, Lemma 4.1], that
\begin{equation}
\left\{X_t ; 0 \leq t \leq T\right\} \text { under } \mathbb{M} \stackrel{\text { law }}{=}\left\{\hat{X}_t ; 0 \leq t \leq T\right\} \text { under } \mathbb{Q}.
\end{equation}
An arbitrage strategy for $\hat{X}$ then follows from classical results on Bessel processes, notably from [\cite{DelbaenSchachermayer1995}, Theorem 6]. The key point is that $\hat{X}$ can be interpreted as a modified Bessel process with parameter $\hat{\delta}>2$.\\

\item[$(iii)$]\underline{\textbf{$\gamma>2$, or $\delta\in(2,+\infty)$.}}\\
Although the equivalent probability measure $\mathbb{Q}$ is well-defined, the process $U$ is now a strict $(\mathbb{F},\mathbb{Q})$-local martingale. As previously mentioned, this loss of the martingale property has been documented since \cite{EmanuelMacBeth1982}. In particular, $\mathbb{E}^{\mathbb{Q}}[U_T]<+\infty$ for all $T>0$, but the martingale property fails and the expected value is not preserved over time. While the market still satisfies (NFLVR) ($U$ is a local martingale bounded from below) the strict local martingale behavior is commonly interpreted as evidence of a price bubble.\\Since the boundary $0$ is never attainable, the arbitrage construction based on conditioning $X$ to remain strictly positive does not apply in this regime.
\end{itemize}

\subsection{Connection between Arbitrage and Spectral Properties }\label{subsec:arb_spec}

\paragraph{Conditioning and its spectral meaning.}
Let
\begin{equation}
\mathcal{G}_\gamma[f]
=
\mu x \,\partial_x f
+\frac{\sigma^2}{2}x^\gamma \,\partial_{xx}f,
\qquad x>0,
\end{equation}
be the generator of the CEV diffusion, and let $\mathcal L_\gamma$ denote the corresponding forward Fokker--Plank operator.  
For $0<\gamma<2$, the boundary point $x=0$ is accessible. Hence the event of never hitting $0$ is non-trivial, and conditioning the process to remain positive is implemented by a Doob $h$-transform. The relevant function $h$ is the strictly positive solution of
\begin{equation}
\mathcal{G}_\gamma[h]=0,
\qquad
h(0)=0,
\qquad
h(x)>0 \ \text{for }x>0.
\end{equation}
Writing this equation explicitly,
\begin{equation}
\frac{\sigma^2}{2}x^\gamma \partial_{xx} h(x)+\mu x \partial_x h(x)=0.
\end{equation}
A direct integration gives
\begin{equation}
\partial_x h(x)=C \exp\!\bigl(\nu x^{2-\gamma}\bigr),
\qquad
\nu=\frac{2\mu}{(\gamma-2)\sigma^2},
\end{equation}
and therefore
\begin{equation}
h(x)=C_1+C_2\int_0^x \exp\!\bigl(\nu y^{2-\gamma}\bigr)\,dy.
\end{equation}
For $0<\gamma<2$ one has $\nu<0$, and the conditioning to stay positive selects the normalized positive harmonic function vanishing at the origin,
\begin{equation}
h_\uparrow(x)=\int_0^x \exp\!\bigl(\nu y^{2-\gamma}\bigr)\,dy,
\end{equation}
and its generator is given by
\begin{equation}
\mathcal{G}_\gamma^\uparrow[ f]
:=
h_\uparrow^{-1}\mathcal{G}_\gamma[h_\uparrow f]
=
\frac{\sigma^2}{2}x^\gamma \partial_{xx}f
+
\left(
\mu x+\sigma^2 x^\gamma \frac{\partial_x h_\uparrow}{h_\uparrow}
\right)\partial_x f.
\end{equation}
Thus the conditioning adds the positive drift term
\begin{equation}
\sigma^2 x^\gamma \frac{\partial_x h_\uparrow(x)}{h_\uparrow(x)},
\end{equation}
which pushes the process away from the absorbing boundary at $0$. This is precisely the mechanism by which the conditioned model ceases to coincide with the original no-arbitrage dynamics.\\

\begin{proposition}
Let
\begin{equation}
\mathcal{G}_\gamma [f]
=
\mu x \,\partial_x f
+\frac{\sigma^2}{2}x^\gamma \,\partial_{xx} f,
\qquad x>0,
\end{equation}
be the CEV generator. Then,

\begin{enumerate}
\item If $0<\gamma<2$, then there exists a non-constant positive harmonic function
\begin{equation}
h_\uparrow(x)=\int_0^x \exp\!\bigl(\nu y^{2-\gamma}\bigr)\,dy,
\qquad
\nu=\frac{2\mu}{(\gamma-2)\sigma^2}<0,
\end{equation}
satisfying
\begin{equation}
\mathcal{G}_\gamma [h_\uparrow]=0,
\qquad
h_\uparrow(0)=0,
\qquad
h_\uparrow(x)>0 \ \text{for }x>0.
\end{equation}
The Doob transform by $h_\uparrow$ yields the conditioned generator
\begin{equation}
\mathcal{G}_\gamma^\uparrow[ f]
=
\frac{\sigma^2}{2}x^\gamma \partial_{xx}f
+
\left(
\mu x+\sigma^2 x^\gamma \frac{\partial_x h_\uparrow}{h_\uparrow}
\right)\partial_x f.
\end{equation}
Hence the conditioned dynamics differs from the original one by the additional positive drift
\begin{equation}
\sigma^2 x^\gamma \frac{\partial_x h_\uparrow}{h_\uparrow},
\end{equation}
which is the spectral signature of the arbitrage construction.

\item For $0<\gamma<1$, this non-trivial conditioning is also visible in the Fokker--Plank spectral problem through the existence of positive integrable eigenfunctions for $\theta\neq \infty$. 

\item For $1\le \gamma<2$, the same conditioning is not represented by an integrable eigenfunction for the Fokker--Plank, but only by a generalized boundary state of the spectral problem of the generator $\mathcal{G}_\gamma$.

\item If $\gamma>2$, then $x=0$ is unattainable, so conditioning on non-negativity is trivial. Accordingly, there is no non-constant positive harmonic function attached to the boundary point $0$ that could generate a non-trivial Doob transform. Therefore the above arbitrage construction is not available for $\gamma>2$.
\end{enumerate}
\end{proposition}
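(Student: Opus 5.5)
The plan is to take the four items in order, relying on the explicit integration already done above the statement and on the spectral results of Section \ref{sec_spectrum_CEV}.

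For item (1), I would first verify that $h_\uparrow$ is well defined, positive, vanishes at $0$ and is harmonic. For $0<\gamma<2$ the integrand $\exp(\nu y^{2-\gamma})$ is continuous on $[0,\infty)$, equals $1$ at $y=0$, and is strictly positive. Hence $h_\uparrow(0)=0$, $h_\uparrow>0$ on $(0,\infty)$, and $h_\uparrow$ is non-constant. Since $\nu<0$ it is moreover bounded, with limit $\int_0^\infty \exp(\nu y^{2-\gamma})\,dy<\infty$. Harmonicity is a one-line check: $\partial_{xx}h_\uparrow=\nu(2-\gamma)x^{1-\gamma}\partial_x h_\uparrow$, so
\[
\tfrac{\sigma^2}{2}x^\gamma\partial_{xx}h_\uparrow=\tfrac{\sigma^2}{2}\nu(2-\gamma)x\,\partial_xh_\uparrow=-\mu x\,\partial_x h_\uparrow,
\]
using $\nu(2-\gamma)\sigma^2/2=-\mu$. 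The formula for the Doob transform then follows from the product rule:
\[
\mathcal{G}_\gamma[h f]=h\,\mathcal{G}_\gamma f+f\,\mathcal{G}_\gamma h+\sigma^2x^\gamma h' f'.
\]
Dividing by $h$ and using $\mathcal{G}_\gamma h_\uparrow=0$ leaves exactly the stated drift correction. This correction is positive because $h_\uparrow'>0$ and $h_\uparrow>0$. The phrase ``spectral signature'' is interpretive, so I would only record that $h_\uparrow$ is a positive zero-eigenfunction of $\mathcal{G}_\gamma$.

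For item (2), I would invoke the integrability analysis for $0<\gamma<1$. For every $\Lambda>1$ there is a unique $\theta(\Lambda)\in\mathbb{R}$, given by \eqref{eq:theta_gamma}, such that $p^c_\lambda$ with $c>0$ is a positive integrable eigenfunction of the corresponding extension. Its mass accumulates near $0$, like $x^{-\gamma}$, which is integrable since $\gamma<1$. The boundary at $0$ is limit-circle and $\theta\neq\infty$ encodes a nontrivial boundary condition there. These two facts together are what I would cite as the forward manifestation of the conditioning.

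For item (3), the regime $1\le\gamma<2$, I would argue as follows. Both endpoints are limit point, so $\mathcal{L}_\gamma$ is self-adjoint with the spectrum \eqref{eq:sp_theta_infty}, consisting only of negative eigenvalues, and its eigenfunctions are non-integrable polynomial pullbacks. Hence no integrable eigenfunction at eigenvalue $0$ exists. On the other hand, $h_\uparrow$ solves $\mathcal{G}_\gamma h=0$ but cannot be an $L^2$ eigenfunction: a nonzero eigenvalue-$0$ eigenvector of $\mathcal{G}_\gamma$ would give $0\in\sigma(\mathcal{L}_\gamma)$ through $M_\pi$, contradicting \eqref{eq:sp_theta_infty}. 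Therefore $h_\uparrow$ is only a generalized state, a solution of the eigenvalue equation outside the Hilbert space and attached to the boundary. The main obstacle is that ``generalized boundary state'' is not sharply defined. I would make it precise as: $h_\uparrow$ is a positive solution at spectral parameter $0\notin\sigma$ that fails to lie in $L^2((0,\infty),\pi)$, checked directly from $\pi(x)\to\infty$ at $\infty$ while $h_\uparrow\to$ const.

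For item (4), $\gamma>2$, every harmonic function has the form $C_1+C_2\int_0^x e^{\nu y^{2-\gamma}}dy$, which is finite because $e^{\nu y^{2-\gamma}}\to0$ as $y\to0$ when $\nu>0$. The boundary $0$ is unattainable by Remark \ref{remk:bc_0} and the discussion in (iii) of the probabilistic section, so $\mathbb{P}[\zeta=\infty]=1$. Conditioning on survival is therefore the identity, and the Doob transform associated with that event is $h\equiv1$. I would add that $h_\uparrow$-type functions grow like $e^{\nu x^{2-\gamma}}\to e^0$ integrated, i.e. linearly at $\infty$, and correspond to conditioning at $\infty$, not at $0$, so they yield no boundary-at-$0$ conditioning and the construction does not apply.
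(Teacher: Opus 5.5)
Items (1)--(3) are fine and follow the paper's route. Item (1) is the same direct integration plus the product rule for $h^{-1}\mathcal{G}_\gamma(hf)$, and items (2)--(3) cite the same integrability and spectral results of Section~\ref{sec_spectrum_CEV}.

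The genuine error is in item (4). For $\gamma>2$ you have $2-\gamma<0$ and $\nu>0$, so $\nu y^{2-\gamma}\to+\infty$ as $y\downarrow 0$. The integrand $e^{\nu y^{2-\gamma}}$ therefore blows up faster than any power of $1/y$; it does not tend to $0$. Hence $\int_0^x e^{\nu y^{2-\gamma}}\,dy=+\infty$ for every $x>0$. There is no finite $h_\uparrow$ for $\gamma>2$, and harmonic functions must be written with an interior base point: $h=C_1+C_2 s$ with $s(x)=\int_1^x e^{\nu y^{2-\gamma}}\,dy$. The object you describe --- a finite, positive harmonic function vanishing at $0$ and growing linearly at $\infty$ --- does not exist. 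If it did, it would be exactly the non-constant positive harmonic function attached to $0$ whose non-existence item (4) asserts, and calling it ``conditioning at $\infty$'' would not dispose of it.

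The correct argument is short. First, $s(0+)=-\infty$, since the integrand is not integrable at $0$. Second, $s(+\infty)=+\infty$, since the integrand tends to $1$. So a non-constant harmonic function $C_1+C_2 s$ becomes negative near $0$ if $C_2>0$, and near $\infty$ if $C_2<0$. Thus the positive harmonic functions are exactly the positive constants, and the only Doob transform is the trivial one. Since $s(0+)=-\infty$ is also Feller's criterion for inaccessibility of $0$, this computation yields the unattainability that you (and the paper) import via Remark~\ref{remk:bc_0}. Your remark that conditioning on an almost sure event is trivial is consistent with this, but it does not by itself prove the claim about harmonic functions.

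A smaller point in item (3): the printed $\pi(x)=x^\gamma e^{\nu x^{2-\gamma}}$ coincides with $w_\gamma$ and tends to $0$ at $\infty$ when $\nu<0$, so ``$\pi(x)\to\infty$'' is false for it as written. What your argument needs is the speed density $m(x)\propto x^{-\gamma}e^{-\nu x^{2-\gamma}}\propto 1/w_\gamma$. For this density $\mathcal{L}_\gamma(mf)=m\,\mathcal{G}_\gamma f$, and $f\mapsto mf$ is, up to a constant, unitary from $L^2((0,\infty),m)$ onto $L^2((0,\infty),w_\gamma)$. With $m$ made explicit, your check $h_\uparrow\notin L^2((0,\infty),m)$ is correct.
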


\begin{remark}
The relevant spectral object is not just any eigenfunction, but a positive one. In Markov theory, positivity is what singles out harmonic functions that can define Doob transforms. So from the full spectral analysis, one can isolate exactly those modes that are candidates for conditioning.\\
This gives a spectral criterion:
A nontrivial conditioning capable of changing the pricing law can arise only if the operator admits a nonconstant positive harmonic or ground-state-type solution compatible with the boundary classification.\\
For $\gamma>2$, the complete spectral problem has no boundary mode at 0 that can serve as a nontrivial h-transform generating a new positive semigroup.\\
The Doob-transform construction is independent of the full spectral decomposition; it requires only a positive harmonic function of the backward generator. However, the full spectral analysis refines this picture by identifying whether that positive state is an admissible forward eigenfunction, only a generalized boundary state, or absent altogether. This yields a spectral classification of the arbitrage mechanism: for $0<\gamma<1$ it is visible in the Fokker--Plank spectrum, for $1\le \gamma<2$ only in the generalized spectrum, and for $\gamma>2$ it is excluded by the absence of a nontrivial positive boundary state at 0.

\end{remark}

\section{Discussion and Further Perspectives}

From the point of view of mathematical finance, these results place the CEV model within the
spectral approach to derivative pricing initiated by eigenfunction-expansion methods for scalar
diffusions \cite{DavydovLinetsky2001,davydov2003pricing,Linetsky2004CEV}, and connect it to
operator-theoretic approaches to long-run pricing and recovery \cite{hansen2009longterm,ross2015recovery}.
In contrast to purely pricing-oriented spectral decompositions, however, the present analysis focuses
on the relation between the spectrum, boundary behavior and arbitrage. This perspective is close in
spirit to the classical arbitrage theory of Delbaen and Schachermayer
\cite{delbaen1994arbitrage,delbaen1998fundamental,delbaen2006mathematics}, but it reformulates part of the no-arbitrage
structure in terms of positive eigenfunctions, harmonic functions, and admissible self-adjoint
realizations of the generator. The link with Bessel processes, already central in the analysis of
CEV arbitrage \cite{DelbaenSchachermayer1995,Delbean,CarrLinetsky2006}, appears here as a
spectral boundary phenomenon.\\

The contribution of the paper is therefore twofold. First, it gives a unified Sturm--Liouville
description of the CEV operator across all elasticity regimes, including the explicit
Laguerre reduction and the corresponding boundary conditions. Second, it identifies the spectral
objects that are relevant for arbitrage. In particular, the positive harmonic function used in the
Doob $h$-transform for $0<\gamma<2$ is interpreted as the operator-theoretic signature of the
conditioning mechanism that removes paths hitting the absorbing boundary. For $0<\gamma<1$,
this mechanism is visible at the level of positive forward eigenfunctions of the Fokker--Plank. For
$1\leq \gamma<2$, it persists only as a generalized boundary state. For
$\gamma>2$, the origin is unattainable and no analogous positive boundary mode exists. The
arbitrage-related phenomenon is instead the strict-local-martingale behavior of the discounted
asset price, corresponding to the bubble regime studied in
\cite{EmanuelMacBeth1982,Cox,jarrow2007complete,jarrow2010incomplete}.\\

The present analysis also clarifies the role of self-adjoint extensions for the CEV model. In a diffusion model with singular
endpoints, specifying the stochastic dynamics is not always independent of specifying admissible
boundary behavior. The choice of extension determines which spectral modes are allowed and hence which semigroup is being considered. In financial terms, this is closely related to the choice of absorbing, reflecting, killing or entrance-type boundary behavior, and therefore to the admissible pricing operator. This observation is consistent with earlier work showing that pricing problems for singular diffusion operators can be interpreted as extension-selection problems for the generator
\cite{Linetsky2004CEV,Linetsky2008Spectral}. The CEV model is particularly transparent
because the Laguerre transformation makes these choices explicit.\\

There are, however, several limitations to the present approach. The analysis is essentially
one-dimensional and relies on the special reducibility of the CEV operator to a generalized Laguerre operator. This yields closed-form eigenfunctions and spectra, but such explicit solvability is not available for general local volatility or stochastic volatility models. Moreover, the spectral analysis is carried out in weighted Hilbert spaces, while pricing and arbitrage questions often naturally live
in spaces of bounded, integrable, or merely measurable payoff functions. Consequently, care is needed when translating spectral statements into probabilistic statements about equivalent martingale
measures, strict local martingales and admissible trading strategies. Another limitation is that the
discussion of arbitrage remains structural: the paper identifies the spectral mechanisms behind
known arbitrage and bubble phenomena, but it does not construct a complete trading-strategy-level
classification for all choices of boundary condition. Finally, the Black--Scholes case $\gamma=2$ is singular from the present point of view and has to be treated separately by logarithmic coordinates rather than by the Laguerre reduction.\\

A natural continuation is to extend the method to more complex Markovian systems. For
multi-dimensional diffusion processes with state variable $X_t\in D\subseteq\mathbb{R}^d$, the Fokker--Plank operator takes the form
\[
    L[p] = -\sum_{i=1}^d \partial_i(b_i p)
    + \frac12 \sum_{i,j=1}^d \partial_{ij}(a_{ij}p),
\]
where $b$ is the drift vector and $a=\sigma\sigma^\top$ is the diffusion matrix. In this setting,
the one-dimensional Sturm--Liouville classification has to be replaced by the spectral theory of
degenerate elliptic operators on domains with boundary. Boundary accessibility is then no longer
described by two endpoints, but by the geometry of the boundary faces, corners and lower-dimensional
strata of $D$. Nevertheless, the same general principle should persist: arbitrage-relevant changes
of measure are encoded by positive eigenfunctions or harmonic functions of the generator,
and conditioning on survival or non-attainment of a boundary should correspond to Doob transforms
by such functions. The challenge is to characterize the admissible positive spectrum of the generator
under financially meaningful boundary conditions.\\

This perspective is particularly relevant for stochastic volatility models. In the Heston model
\cite{heston1993closed}, for example, the state variables are the asset price and its variance, and
the variance process has a CIR-type boundary at zero. Since CIR and CEV dynamics are both closely
related to squared Bessel processes, one expects boundary classification and positive harmonic
functions to remain central. A two-dimensional spectral approach could analyze the joint generator,
possibly after Fourier transformation in the logarithmic price variable, reducing the problem to a family of one-dimensional operators in the variance coordinate. This is precisely the type of structure for which explicit or semi-explicit spectral methods may remain feasible. Similar ideas could also be applied to multi-factor affine models, local-stochastic volatility models, and interacting systems of assets whose volatility coefficients degenerate at parts of the state-space boundary.\\

In summary, the CEV model provides a useful test case in which the links between spectral theory,
boundary behavior and arbitrage can be made fully explicit. The present results suggest that, more
generally, arbitrage and bubble phenomena in diffusion models may be studied through the positive
spectrum of the pricing generator, the admissible boundary conditions of the associated forward
operator, and the Doob transforms generated by positive harmonic states. Extending this framework
to multi-dimensional and stochastic volatility models remains a promising direction for future work.

\medskip

\noindent
\textbf{Acknowledgment.}\\
The authors would like to thank Josef Teichmann, Walter Schachermayer and Patrick Cheridito for useful comments and suggestions.

\bibliographystyle{abbrv}
\bibliography{Biblio}

\newpage

\appendix

\section{General Sturm--Liouville Theory on Weighted Hilbert Spaces}\label{app:generic_SL}

Let us recall some general properties of Sturm--Liouville operators \cite{zettl2005sturm}. A Sturm--Liouville operator is a second-order linear ordinary differential operator written in normal form as 
\begin{equation}
    \mathcal{L}(p) = \frac{1}{w}\left[\frac{\partial}{\partial x}\left(Q_2 \frac{\partial p}{\partial x}\right)+Q_0p\right],
\end{equation}
where the functions are defined on some possibly infinite interval $I=(a,b)\subseteq \mathbb{R}$ together with some boundary conditions \footnote{Classically, the coefficient functions appearing in the definition of $\mathcal{L}$ are denoted as $q=Q_0$ and $p=Q_2$. Since, however, $p$ is reserved for the probability density function appearing in the Fokker--Planck equation, we changed the notation to avoid confusion. We also emphasize that the sign of the leading-order coefficient $Q_2$ is positive as opposed to the negative sign-convection frequently employed in the literature.} Throughout, we make the minimal assumptions that
\begin{equation}
    \begin{split}
        Q_2^{-1}\ &\in L_{\rm loc}^1(I),\quad Q_0>0,\\
        Q_0 & \in L_{\rm loc}^1(I),\\
        w & \in L_{\rm loc}^1(I),\quad w>0,
    \end{split}
\end{equation}
where $L_{\rm loc}^1(I)$ denotes the space of locally integrable functions on the interval $I$.\\
 
If the interval $I$ is finite, the functions satisfy $w\in C^0(I)$ with $w>0$, $Q_0\in C^0(I)$ as well as $Q_2\in C^1(I)$ with $Q_2>0$ and the boundary conditions are of mixed Dirichlet--Neumann type, the SL problem is called \emph{regular}. If it is not regular, it is called \emph{singular}.\\

The operator $\mathcal{L}$ is naturally defined on the Hilbert space of $w$-weighted square integrable functions over $I$. We denote the $w$-weighted inner products as 
\begin{equation}
\begin{split}
        \langle p,q\rangle_w  = \int_a ^b p(x)q(x)w(x)\, dx,
\end{split}
\end{equation}
with norm 
\begin{equation}
     \|p\|_w^2 = \langle p,p \rangle_w
\end{equation}
and its corresponding Hilbert space as 
\begin{equation}
\begin{split}
    L^2(I, w)  = \{p: I \to \mathbb{R}: \|p\|_w<\infty \}.\\
\end{split}
\end{equation}
The maximal domain of definition of definition of $\mathcal{L}$ in $L^2_w(I)$ is then given by
\begin{equation}
    \mathcal{D}(\mathcal{L}) = \{f\in L^2(I, w): f, Q_2f\in AC_{\rm loc}(I),\mathcal{L}[f]\in L^2_w(I)\},
\end{equation}
where $AC_{\rm loc}(I)$ is the space of locally absolutely continuous functions over $I$. \\

We denote the \emph{modified Wronskian} of two functions $f$ and $g$ at the point $x$ as 
\begin{equation}
    W_x(f,g) = Q_2(x)[f(x)g'(x)-f'(x)g(x)]. 
\end{equation}

The SL operator $\mathcal{L}$ is called \emph{limit circle} (l.c.) at the point $a$ is there exists a function $g\in \mathcal{D}(\mathcal{L})$ such that $W_a(g,f)\neq 0$ for at least one $f\in \mathcal{D}(\mathcal{L})$. Otherwise, $\mathcal{L}$ is called \emph{limit point} (l.p.) at $a$. It follows that $\mathcal{L}$ is limit point if and only if $W_a(f,g)=0$ for all $f,g\in\mathcal{D}(\mathcal{L})$. In the following, when the operator $\mathcal{L}$ is fixed, we will call the boundary points l.c. or l.p. respectively.\\

The following theorem guarantees the self-adjointness of $\mathcal{L}$ for suitable boundary conditions. \\

\begin{theorem}\label{thmselfadjoint}
If $a$ is l.c., let $g_a$ be such that $W_a(g_a,f)\neq 0$ for at least one $f$. Similarly, let $g_b$ an analogous function if $b$ is l.c.. The operator $\mathcal{L}$ defined on the domain 
\begin{equation}
    \mathcal{D}_{a,b} = \{ f\in L^2(I, w): W_a(g_a,f) =0 \quad \text{if a is l.c.},\quad W_b(g_b,f) =0  \quad \text{if b is l.c.} \}
\end{equation}
is self-adjoint and hence, its spectrum is real. 
\end{theorem}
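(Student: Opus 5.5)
The plan is to reduce the claim to the classical von Neumann / Glazman–Krein–Naimark description of self-adjoint realizations of a Sturm--Liouville expression, phrased through the modified Wronskian $W_x$. I will fix the maximal operator $\mathcal{L}_{\max}$ on $\mathcal{D}(\mathcal{L})$ and the minimal operator $\mathcal{L}_{\min}=\mathcal{L}_{\max}^*$. The restriction $\mathcal{L}$ to $\mathcal{D}_{a,b}$ (understood as a subset of $\mathcal{D}(\mathcal{L})$) then lies between them.

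The first step is Green's (Lagrange) identity. For $f,g\in\mathcal{D}(\mathcal{L})$ and $a<\alpha<\beta<b$, integration by parts in the normal form gives
\begin{equation*}
\int_\alpha^\beta \big(\mathcal{L}[f]\,g - f\,\mathcal{L}[g]\big)\,w\,dx = W_\beta(g,f)-W_\alpha(g,f).
\end{equation*}
The left side converges as $\alpha\to a$, $\beta\to b$, because all four functions lie in $L^2(I,w)$. Hence the one-sided limits $W_a$ and $W_b$ exist as finite numbers, and
\begin{equation*}
\langle \mathcal{L}f,g\rangle_w-\langle f,\mathcal{L}g\rangle_w = W_b(g,f)-W_a(g,f).
\end{equation*}

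The second step is symmetry. At a limit point endpoint, $W$ vanishes identically on $\mathcal{D}(\mathcal{L})$ by the definition in the paper. At a limit circle endpoint $a$, I need that $W_a(g_a,f)=W_a(g_a,h)=0$ implies $W_a(f,h)=0$. I will use the Plücker-type identity for the boundary form: for real functions,
\begin{equation*}
W_a(f_1,f_2)W_a(f_3,f_4)-W_a(f_1,f_3)W_a(f_2,f_4)+W_a(f_1,f_4)W_a(f_2,f_3)=0.
\end{equation*}
This holds at every interior point because it is a $4\times4$ Pfaffian identity for vectors in $\mathbb{R}^2$, and so it passes to the limit. Choose $f_1=g_a$ and an auxiliary $f_4=k$ with $W_a(g_a,k)\neq0$; such a $k$ exists by the hypothesis on $g_a$. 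The identity then gives $W_a(f,h)W_a(g_a,k)=0$, hence $W_a(f,h)=0$. So $\mathcal{L}$ is symmetric on $\mathcal{D}_{a,b}$.

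The third step, and the main obstacle, is maximality: showing $\mathcal{L}^*\subseteq\mathcal{L}$. Since $\mathcal{D}_{a,b}\supseteq\mathcal{D}(\mathcal{L}_{\min})$, the adjoint is a restriction of $\mathcal{L}_{\max}$. Take $h\in\mathcal{D}(\mathcal{L}^*)$; by Green's formula, $W_b(f,h)-W_a(f,h)=0$ for all $f\in\mathcal{D}_{a,b}$. I will localize using the Naimark patching lemma: any prescribed behaviour near $a$ and near $b$ of maximal-domain functions can be glued by a function that is compactly cut off in the middle. This lets me choose $f\in\mathcal{D}_{a,b}$ with $f=g_a$ near $a$ and $f\equiv0$ near $b$. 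That choice forces $W_a(g_a,h)=0$, which is exactly the boundary condition, so $h\in\mathcal{D}_{a,b}$.

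Here there is a subtlety I must check: $g_a$ itself must satisfy $W_a(g_a,g_a)=0$, which holds trivially by antisymmetry. I also need real-valuedness of $g_a$ to avoid complex Wronskian issues; I will assume $g_a$ is real, as is implicit in the statement. If the patching lemma is not available, I would instead count deficiency indices: there is one per limit circle endpoint, and each condition $W_a(g_a,\cdot)=0$ cuts a codimension-one piece, so the dimension count gives maximality. Reality of the spectrum then follows from self-adjointness in the standard way.
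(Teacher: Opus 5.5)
Your proof is correct. It is the standard Glazman--Krein--Naimark argument behind this result; the paper itself gives no proof and only quotes the theorem from \cite{zettl2005sturm}. The only supporting argument in the paper is the integration-by-parts Remark that follows the statement. That Remark matches your first step and explains why the boundary terms must be killed, i.e.\ the mechanism behind symmetry. Your signed identity $\langle \mathcal{L}f,g\rangle_w-\langle f,\mathcal{L}g\rangle_w=W_b(g,f)-W_a(g,f)$ is the correct form of it; the Remark's last line should read $W_a(f,g)-W_b(f,g)+\langle f,\mathcal{L}g\rangle_w$.

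Your argument then adds exactly what the citation hides:
\begin{enumerate}
\item The Pl\"ucker (Pfaffian) identity upgrades the single condition $W_a(g_a,\cdot)=0$ to $W_a(f,h)=0$ for all pairs in $\mathcal{D}_{a,b}$.
\item The patching step (take $f=g_a$ near $a$ and $f\equiv 0$ near $b$, plus the mirror choice at $b$) gives $\mathcal{D}(\mathcal{L}^*)\subseteq\mathcal{D}_{a,b}$. The paper never addresses this maximality step.
\end{enumerate}

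You also correctly repair two imprecisions in the printed statement:
\begin{enumerate}
\item $\mathcal{D}_{a,b}$ has to be taken inside the maximal domain $\mathcal{D}(\mathcal{L})$, not all of $L^2(I,w)$.
\item $g_a,g_b$ must be real. This is automatic here because the paper's $L^2(I,w)$ consists of real functions; over $\mathbb{C}$ one needs $W_a(g_a,\overline{g_a})=0$.
\end{enumerate}

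The cost of your self-contained route is reliance on three standard facts, which are precisely what the reference supplies:
\begin{enumerate}
\item $\mathcal{L}_{\min}^*=\mathcal{L}_{\max}$;
\item the characterization of $\mathcal{D}(\mathcal{L}_{\min})$ by vanishing boundary forms at both endpoints;
\item Naimark's patching lemma.
\end{enumerate}
With these in hand, the deficiency-index fallback you mention is not needed.
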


\begin{remark}
The distinction between limit circle and limit point might appear technical at first glance. It only characterizes, however, if the boundary behavior of $Q_2$ is such that we need to specify additional boundary conditions to make $\mathcal{L}$ self-adjoint, or not. This can be seen immediately by integration by parts 
    \begin{equation}
    \begin{split}
        \langle \mathcal{L} f,g\rangle_w &  = \int_a^b \left(\frac{1}{w} \frac{\partial }{\partial x} \left[Q_2\frac{\partial f}{\partial x}\right] + Q_0f\right) g w \, dx\\
        & = \left[Q_2\frac{\partial f}{\partial x} g\right]_a^b + \int_{a}^b Q_0 fg w -Q_2 \frac{\partial f}{\partial x}\frac{\partial g}{\partial x}\, dx \\ 
        & = \left[Q_2\frac{\partial f}{\partial x} g\right]_a^b - \left[Q_2\frac{\partial g}{\partial x} f\right]_a^b+ \int_{a}^b Q_0 fg w + f \frac{\partial }{\partial x}\left[Q_2\frac{\partial g}{\partial x}\right]\, dx\\
        & = W_{a}(f,g)+W_b(f,g)+ \langle g,\mathcal{L} g \rangle. 
                \end{split}
    \end{equation}
    
\end{remark}
We conclude this section with a lemma.
 
\begin{lemma}\label{LemmaNF}
    A Sturm--Liouville operator of the form
    \begin{equation}
        \mathcal{L}(p) = l_0p+l_1\frac{\partial p}{\partial x} + l_2 \frac{\partial^2p}{ \partial x^2},
    \end{equation}
with $l_2>0$ can be brought to normal form 
\begin{equation}\label{SLnormal}
    \mathcal{L}(p) = \frac{1}{w}\left[\frac{\partial}{\partial x}\left(Q_2 \frac{\partial p}{\partial x}\right)+Q_0p\right],
\end{equation}
for the weight function
\begin{equation}\label{defw}
    w(x) = \frac{1}{l_2(x)}\exp\left(\int_{x_0}^x \frac{l_1(\xi)}{l_2(\xi)}\, d\xi\right),
\end{equation}
and the coefficient functions 
\begin{equation}\label{defQ}
    Q_2 = wl_2,\quad Q_0 = w l_0
\end{equation}
\end{lemma}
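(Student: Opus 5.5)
We verify the identity directly by expanding the normal form. Everything reduces to one first-order ODE for the weight $w$. The plan is to expand $(Q_2 p')'$ with $Q_2 = w l_2$, compare coefficients with $l_0 p + l_1 p' + l_2 p''$, and check that the choice \eqref{defw} makes the first-order coefficients agree.

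\textbf{Step 1 (expansion).} By the product rule,
\begin{equation*}
\frac{1}{w}\left[\frac{\partial}{\partial x}\left(w l_2 \frac{\partial p}{\partial x}\right) + w l_0 p\right] = l_2 \frac{\partial^2 p}{\partial x^2} + \frac{(w l_2)'}{w}\frac{\partial p}{\partial x} + l_0 p .
\end{equation*}
The second-order and zeroth-order terms already agree with the original operator. This is automatic from $Q_2 = w l_2$ and $Q_0 = w l_0$.

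\textbf{Step 2 (reduction).} The two forms coincide for all $p$ if and only if
\begin{equation*}
(w l_2)' = l_1 w ,
\end{equation*}
that is, $\frac{d}{dx}\log(w l_2) = l_1/l_2$. Positivity of $w l_2$ is ensured by $l_2>0$ together with the exponential form of $w$.

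\textbf{Step 3 (weight).} By \eqref{defw},
\begin{equation*}
w l_2 = \exp\left(\int_{x_0}^x \frac{l_1(\xi)}{l_2(\xi)}\, d\xi\right).
\end{equation*}
By the fundamental theorem of calculus,
\begin{equation*}
(w l_2)' = \frac{l_1}{l_2}\, w l_2 = l_1 w ,
\end{equation*}
which is exactly the condition of Step 2. This completes the verification.

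\textbf{Regularity.} The only delicate point is regularity. Differentiating $\int_{x_0}^x l_1/l_2$ pointwise requires $l_1/l_2$ to be continuous, or at least locally integrable if the derivative is taken almost everywhere. In addition, $w l_2$ must be differentiable, or at least in $AC_{\rm loc}(I)$. I would assume $l_1, l_2$ continuous with $l_2>0$ on $I$, which holds for the CEV coefficients on $(0,\infty)$. In the merely $L^1_{\rm loc}$ setting of the appendix, the identity holds almost everywhere with $Q_2 p' \in AC_{\rm loc}(I)$. The choice of $x_0$ only rescales $w$ by a positive constant, which cancels in $\frac{1}{w}[\cdots]$.
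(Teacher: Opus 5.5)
Your proof is correct and takes essentially the same route as the paper: expand the normal form, match coefficients, and reduce everything to a single first-order ODE for $w$, which the stated weight solves. The only difference is minor. The paper writes this ODE as $\partial_x w = \frac{l_1-\partial_x l_2}{l_2}\,w$ and integrates it, which tacitly requires $l_2$ to be differentiable. You instead verify $(wl_2)' = l_1 w$ directly, which needs only $wl_2$ to be differentiable and so fits slightly better with the $L^1_{\mathrm{loc}}$ assumptions of the appendix.
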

The proof is standard and we include it here for completeness. 
\begin{proof}
Expanding the normal form \eqref{SLnormal}, we find that the coefficient functions are related as
    \begin{equation}\label{rellQ}
        l_2 = \frac{Q_2}{w},\quad l_1 = \frac{\partial_x Q_2}{w},\quad l_0 = \frac{Q_0}{w}. 
    \end{equation}
    Taking an $x$-derivative of $l_2$ and comparing the expressions for $\partial_x Q_2$, we find that the weight function satisfies the ODE
    \begin{equation}
        \partial_x w = \frac{l_1-\partial_x l_2}{l_2} w,
    \end{equation}
    which can be readily integrated to give expression \eqref{defw}. The expressions \eqref{defQ} then follow immediately from \eqref{rellQ}. 
\end{proof}

\section{Complete Spectral Analysis of the Generalized Laguerre Operator}\label{sec:spectrum_Laguerre}
We introduce the \emph{generalized Laguerre operator}, 
\begin{equation}\label{defLa}
    \mathbf{L}_a [p] :=   yp'' + (a+1-y)p',\quad a\in\mathbb{R},
\end{equation}
where the prime denotes a derivative with respect to $y$. The operator $\mathbf{L}_a$ is itself a Sturm--Liouville operator with weight 
\begin{equation}
    w(y) = y^a e^{-y},
\end{equation}
and Sturm--Liouville normal form coefficients
\begin{equation}
\begin{split}
        Q_0(y) & = -y^1e^y\\
        Q_2(y) & = y^{a+1}e^y. 
\end{split}
\end{equation}

The following results are derived in \cite{derkach1998extensions} and allow a complete spectral characterization of the generalized Laguerre operator \eqref{defLa}. Before we can formulate the main theorem on self-adjoint extensions of $\mathbf{L}_a$ with general boundary conditions, we introduce some notions from operator theory. 

\begin{definition}[Indefinite Inner Products: Krein and Pontrayagin Spaces]
Let $\mathcal{K}$ be a complex vector space. 
A Hermitian sesquilinear form $\langle\cdot,\cdot\rangle : \mathcal{K}\times\mathcal{K}\to\mathbb{C}$ 
is called an \emph{indefinite inner product} if
\begin{equation}
[x,y]=\overline{[y,x]}, \qquad x,y\in\mathcal{K}.
\end{equation}
The pair $(\mathcal{K},\langle\cdot,\cdot\rangle)$ is called an \emph{indefinite inner product space}.\\
A \emph{Krein space} is an indefinite inner product space $(\mathcal{K},\langle\cdot,\cdot\rangle)$
for which there exists a decomposition
\begin{equation}
\mathcal{K}=\mathcal{K}_+ \,\oplus, \mathcal{K}_-
\end{equation}
such that
\begin{enumerate}
\item $(\mathcal{K}_+,\langle\cdot,\cdot\rangle)$ and $(\mathcal{K}_-,-\langle\cdot,\cdot\rangle)$ are Hilbert spaces,
\item $\mathcal{K}_+$ and $\mathcal{K}_-$ are orthogonal with respect to $\langle\cdot,\cdot\rangle$.
\end{enumerate}
Such a decomposition is called a \emph{fundamental decomposition}.\\
A \emph{Pontrayagin space} is a Krein space for which $\dim \mathcal{K}_-=\kappa<\infty$. The number $\kappa$ is called \emph{index}. 
\end{definition}
Following \cite{derkach1998extensions}, for any $a\in\mathbb{R}$, we introduce the indefinite inner product
\begin{equation}\label{definnerproda}
        (f,g)_a
=
\int_0^\infty x^a e^{-x} f(x)\overline{g(x)}\,dx
-
\sum_{j=0}^{n-1}
\frac{f^{(j)}(0)\overline{g^{(j)}(0)}}{j!\,\Gamma(a+j+1)},
    \end{equation}
where $\Gamma$ denotes again the Gamma function. Define the boundary operators 
\begin{equation}
     \mathcal{B}_0 f := \begin{cases}
        \lim_{x\to 0} x^{a+1}f'(x),\quad & a \in (-1,1),\\
        \lim_{x\to 0}\frac{(-1)^n\Gamma(a+1)}{\Gamma(a+n+1)}x^{a+n+1}f^{(n+1)}(x),\quad & a \in (-n-1,-n), \; n\in \mathbb{N} /\{ 0 \},
    \end{cases}
\end{equation}
and 
\begin{equation}
      \mathcal{B}_1 f := \begin{cases}
        \lim_{x\to 0}f(x) + \frac{x}{a}f'(x),\quad & a \in (0,1),\\
        \lim_{x\to 0}f(x),\quad & a \in (-\infty,0),
    \end{cases}
\end{equation}
and consider the Laguerre operator $\mathbf{L}_a$ together with the one-parameter family of boundary conditions
\begin{equation}\label{eq:self_adjoint_condition}
     \mathcal{B}_1 f - \theta  \mathcal{B}_0 f =0, \quad \theta \in \overline{\mathbb{R}}.
\end{equation}
In case $\theta = \infty$, the boundary condition \eqref{eq:self_adjoint_condition} takes the form
\begin{equation}
    \mathcal{B}_0 f =0.
\end{equation}
We introduce the Weil function 
\begin{equation}\label{weil_function}
    m_a(\Lambda) = -\frac{\Gamma(\Lambda) \Gamma(-a)}{\Gamma(\Lambda -a)\Gamma(1+a)}.
\end{equation}

\begin{theorem}[Spectrum of the Laguerre operator for arbitrary $a$]\label{thm:spectru_Laguerre}
Let $a\in\mathbb{R}$ and consider the generalized Laguerre operator \eqref{defLa} and let $\mathcal H(a)$ denote the completion of the space of polynomials with respect to the indefinite inner product \eqref{definnerproda}. The spectrum of $\mathbf{L}_a$ can be characterized as follows: 
\begin{enumerate}
\item[(i)] \underline{The case $a\geq 1$}: The minimal operator generated by $\mathbf{L}_a$ in the Hilbert space $L^2(\mathbb{R}_+,x^a e^{-x}dx)$ is symmetric. In the case $a\geq 1$, both $\infty$ and $0$ are limit point and the operator $\mathbf{L}_a$ is self-adjoint on $L^2_w(0,\infty)$. Its spectrum is discrete and is purely discrete and given explicitly as 
    \begin{equation}\label{eq:spec_Laguerre_theta_infty}
        \sigma(\mathbf{L}_a) = \{-n\}_{ n\in\mathbb{N}}.
    \end{equation}
The corresponding eigenfunctions are the \emph{generalized Laguerre polynomials}, defined by the Rodrigues formula
\begin{equation}\label{defLaguerrepoly}
    L^{a}_{n}(x) = \frac{x^{-a}e^x}{n!}\left(\frac{d}{dx}\right)^{n}(e^{-x}x^{n+a}). 
\end{equation}
\item[(ii)] \underline{The case $|a|<1$}: The operator $\mathbf{L}_a$ has deficiency indices $(1,1)$ and all self-adjoint extensions $\mathbf{L}_{a,\theta}$ are given by the boundary conditions
\begin{equation}
 \mathcal{B}_1 y=\theta\, \mathcal{B}_0 y, \qquad \theta\in\overline{\mathbb{R}}.
\end{equation}
The spectrum of the self-adjoint extension obtained for fixed $\theta \in \mathbb{R}$ consists of simple eigenvalues $\Lambda$, given by the zeros of 
\begin{equation}\label{eq:spec_theta_R}
    m_a(\Lambda)-\theta.
\end{equation}
If instead $\theta = \infty$, it coincides with \eqref{eq:spec_Laguerre_theta_infty}.

\item[(iii)] \underline{The case $-n-1<a<-n$ for some $n\in\mathbb N$}: The completion $\mathcal H(a)$ of polynomials with respect to the indefinite inner product \eqref{definnerproda} is a Pontryagin space with index
\begin{equation}
\kappa=\left\lfloor\frac{n+1}{2}\right\rfloor.
\end{equation}
The minimal Laguerre operator in $\mathcal H(a)$ is a Hermitian operator (not densely defined) with deficiency indices $(1,1)$. Its adjoint admits the boundary triple $(\mathbb C,\Gamma_0,\Gamma_1)$ and
all self-adjoint extensions are given by
\begin{equation}
 \mathcal{B}_1 y=\theta\, \mathcal{B}_0 y, \qquad \theta\in\overline{\mathbb{R}}. 
\end{equation}
In analogy to the Hermitian Hilbert space case, the extension defined by $\theta =\mathbb{R}$ has simple spectrum given by the zeros of \eqref{eq:spec_theta_R}, while the one defined by $ \mathcal{B}_0 y=0$ ($\theta=\infty$) has simple spectrum
\begin{equation}
\sigma(\mathbf{L}_{a,\infty})=\{-n\}_{n\in\mathbb{N}},
\end{equation}
and the corresponding eigenfunctions are again the Laguerre polynomials
$L_n^{a}(x)$ as defined in \eqref{defLaguerrepoly}, which form a complete system in $\mathcal H(a)$.
\end{enumerate}
\end{theorem}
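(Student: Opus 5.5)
The plan is to treat $\mathbf{L}_a$ as a singular Sturm--Liouville operator and classify its endpoints from the explicit Kummer solutions. Every eigenvalue computation then reduces to asymptotics of the confluent hypergeometric functions $\Phi$ and $\Psi$. For the eigenvalue problem $\mathbf{L}_a p=\Lambda p$ (with the sign conventions of \eqref{defLa}), a fundamental system is $\Phi(-\Lambda,a+1,y)$ and $\Psi(-\Lambda,a+1,y)$.

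\textbf{Endpoint classification.} Near $y=0$ the two Frobenius branches behave like $1$ and $y^{-a}$. The weight is $w(y)=y^a e^{-y}$, so $y^{-a}$ is square integrable near $0$ iff $\int_0 y^{-a}\,dy<\infty$, i.e.\ iff $a<1$. Hence $0$ is limit point for $a\ge 1$ and limit circle for $|a|<1$. Near $\infty$, one solution grows like $e^y y^{-a-1-\Lambda}$, which is not in $L^2(w)$ since $e^{2y}e^{-y}$ diverges; the other grows only polynomially. So $\infty$ is always limit point.

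\textbf{Case (i), $a\ge 1$.} Both endpoints are limit point, so the minimal operator is essentially self-adjoint by the Weyl alternative and no boundary condition is needed. The Rodrigues formula \eqref{defLaguerrepoly} directly gives $\mathbf{L}_a L_n^a=-nL_n^a$. Polynomials are dense in $L^2(\mathbb{R}_+,y^ae^{-y})$ because the measure is determinate, e.g.\ by Carleman's condition. Hence the $L_n^a$ form a complete orthogonal system, and the spectrum is exactly $\{-n\}$ and purely discrete.

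\textbf{Case (ii), $|a|<1$.} The deficiency indices are $(1,1)$. First I will verify that, for $f,g$ in the maximal domain, the Lagrange form at $0$ equals $\mathcal{B}_1f\,\mathcal{B}_0g-\mathcal{B}_0f\,\mathcal{B}_1g$ up to a nonzero constant. To do so, expand $f=c_0(1+O(y))+c_1y^{-a}(1+O(y))$, compute $\mathcal{B}_0f=-ac_1$, and read off $\mathcal{B}_1f$ from the constant term (the $\tfrac{y}{a}f'$ correction in $\mathcal{B}_1$ removes the $y^{-a}$ contribution when $0<a<1$). Then $(\mathbb{C},\mathcal{B}_0,\mathcal{B}_1)$ is a boundary triple, and all self-adjoint extensions are $\mathcal{B}_1=\theta\mathcal{B}_0$ by Theorem~\ref{thmselfadjoint}. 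For the spectrum, note that the solution square integrable at $\infty$ is $\Psi$. Inserting the small-$y$ expansion of $\Psi$,
\[
\frac{\Gamma(-a)}{\Gamma(-\Lambda-a)}+\frac{\Gamma(a)}{\Gamma(-\Lambda)}\,y^{-a}+\dots,
\]
into $\mathcal{B}_1-\theta\mathcal{B}_0=0$ gives precisely $m_a(\Lambda)=\theta$, after aligning the sign convention of $\Lambda$ with \eqref{weil_function}. For $\theta=\infty$ the condition $\mathcal{B}_0=0$ kills the $y^{-a}$ coefficient, which forces $1/\Gamma(-\Lambda)=0$, i.e.\ $\Lambda\in-\mathbb{N}$ with polynomial eigenfunctions. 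Simplicity of the eigenvalues follows from the limit-point property at $\infty$.

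\textbf{Case (iii), $-n-1<a<-n$.} This is the main obstacle, since $y^ae^{-y}$ is no longer integrable at $0$ and no Hilbert space is available. I would obtain \eqref{definnerproda} by analytic continuation in $a$ of the Laguerre inner product. Concretely, subtracting the first $n$ Taylor terms of $fg$ regularizes $\int y^a e^{-y}fg$, and the Gamma-function terms are exactly the continued values of those subtracted moments. Then:
\begin{itemize}
\item The Laguerre polynomials remain orthogonal with norms $\Gamma(n+a+1)/n!$, by continuation of the classical identity.
\item The negative index $\kappa$ equals the number of $j$ with $\Gamma(a+j+1)<0$; counting the sign changes of $\Gamma$ on the negative intervals gives $\lfloor (n+1)/2\rfloor$.
\item Completeness of $\{L_n^a\}$ in $\mathcal{H}(a)$ holds by construction, which yields $\sigma(\mathbf{L}_{a,\infty})=\{-n\}$.
\end{itemize}
For the general extensions, I would build the boundary triple from the higher-order boundary functionals $\mathcal{B}_0,\mathcal{B}_1$. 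I would then show that the Weyl function is the analytic continuation of $m_a$, so that the Krein resolvent formula locates the spectrum at the zeros of $m_a(\Lambda)-\theta$. For the delicate points (the non-densely defined Hermitian operator and its boundary triple in Pontryagin space), I expect to follow \cite{derkach1998extensions} directly.
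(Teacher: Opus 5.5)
The paper gives no proof of this theorem: it states that the results ``are derived in \cite{derkach1998extensions}'' and imports all three cases. Your proposal is correct in outline, and for (i) and (ii) it takes a genuinely different, more self-contained route:
\begin{enumerate}
\item the Weyl alternative for the endpoint classification (Frobenius exponents $0,-a$ at the origin, and the $e^{y}$-growing Kummer solution at infinity);
\item determinacy of the moment problem (Carleman plus M.~Riesz) for completeness of $\{L_m^a\}$;
\item an explicit computation of the Weyl function of the boundary triple $(\mathbb{C},\mathcal{B}_0,\mathcal{B}_1)$ from the small-$y$ expansion of $\Psi$.
\end{enumerate}
This gives something the paper lacks: a check that $m_a$ really is the Weyl function for the stated boundary maps. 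It is the same computation the paper uses implicitly later in \eqref{eq:theta_gamma}. The paper's citation, in exchange, buys brevity and covers the hard case (iii) at once. For (iii) you also defer to Derkach for the non-densely defined Hermitian operator and its boundary triple in the Pontryagin space. Your count $\kappa=\lfloor (n+1)/2\rfloor$, from the signs of the continued norms $\Gamma(m+a+1)/m!$, is correct. The Weyl-function computation also carries over verbatim to (iii). Indeed $\lim_{y\to0}y^{a+n+1}\Psi^{(n+1)}(\Lambda,1+a,y)=(-1)^{n+1}\Gamma(a+n+1)/\Gamma(\Lambda)$, so the normalization in $\mathcal{B}_0$ again yields $\mathcal{B}_0\Psi=-\Gamma(1+a)/\Gamma(\Lambda)$, while $\mathcal{B}_1\Psi=\Psi(0)=\Gamma(-a)/\Gamma(\Lambda-a)$.

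Several details need repair.
\begin{enumerate}
\item Sign of the Kummer parameter. The solutions of $\mathbf{L}_ap=\Lambda p$ are $\Phi(\Lambda,1+a,y)$ and $\Psi(\Lambda,1+a,y)$, as in \eqref{sol_kummer}, not the ones with $-\Lambda$. As written, your condition $1/\Gamma(-\Lambda)=0$ gives $\Lambda\in\mathbb{N}$, which contradicts your conclusion. With the correct parameter no ``alignment'' is needed: $\mathcal{B}_0\Psi=-a\Gamma(a)/\Gamma(\Lambda)=-\Gamma(1+a)/\Gamma(\Lambda)$ and $\mathcal{B}_1\Psi=\Gamma(-a)/\Gamma(\Lambda-a)$. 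So $\mathcal{B}_1\Psi=\theta\,\mathcal{B}_0\Psi$ is exactly $m_a(\Lambda)=\theta$, and $\theta=\infty$ forces $1/\Gamma(\Lambda)=0$, i.e.\ $\Lambda\in-\mathbb{N}$.
\item Discreteness in (ii). You only identify the eigenvalues; to conclude that the spectrum consists of them you must exclude essential spectrum. One way: run your determinacy argument for all $a>-1$, so $\mathbf{L}_{a,\infty}$ has the complete eigenbasis $\{L_m^a\}$ and compact resolvent. Then note that every other extension differs from it by a rank-one resolvent perturbation.
\item The boundary-form identity. General elements of the maximal domain do not admit the expansion $c_0(1+O(y))+c_1y^{-a}(1+O(y))$. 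Prove the identity on $D_{\min}\dotplus\mathrm{span}\{\chi u_0,\chi u_1\}$ (reference solutions $u_0,u_1$, cutoff $\chi$ near $0$), or write $\mathcal{B}_0,\mathcal{B}_1$ as Wronskians against $u_0,u_1$.
\item The point $a=0$. There the second branch is logarithmic and $\mathcal{B}_1$ and $m_a$ are undefined, so it must be excluded or treated separately.
\item The inner product in (iii). Your regularized product $\int_0^\infty y^ae^{-y}\bigl(f\bar g-T_{n-1}[f\bar g]\bigr)\,dy+\sum_{j<n}\frac{(f\bar g)^{(j)}(0)}{j!}\Gamma(a+j+1)$, with $T_{n-1}$ the Taylor polynomial at $0$, is the right analytic continuation. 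It is not literally \eqref{definnerproda} as printed, whose integral diverges at $0$ for $a<-1$ (e.g.\ for $f=g=1$). So $\mathcal{H}(a)$ should be defined through your form.
\end{enumerate}
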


The positive integers are denoted as $\mathbb{N}$ and are assumed to include zero. 

\subsection{Eigenvalues of the Laguerre Operator}
The eigenvalue equation associated to the generalized Laguerre operator \eqref{defLa},
\begin{equation}
    \mathbf{L}_ap = \Lambda p,
\end{equation}
is called \emph{Kummer equation} and for $a\neq -n, \Lambda \neq m,\; (n,m)\in \mathbb{N}^2$, its general solution is given by  
\begin{equation}\label{sol_kummer}
    p_\Lambda(y) = C_1 \Phi(\Lambda,1+a,y) + C_2 \Psi(\Lambda,1+a,y), \quad 
\end{equation}
where $\Phi$ and $\Psi$ are certain degenerate hypergeometric functions, see \cite{abramowitz1948handbook, derkach1998extensions}, and $(C_1,C_2)\in\mathbb{R}^2$. Functions of the form \eqref{sol_kummer} are the only possible eigenfunctions of the generalized Laguerre operator. 
They satisfy the asymptotics
\begin{equation}\label{eq:as_phi_Psi_inf} 
    \Phi(\Lambda,1+a,y) = \frac{\Gamma(1+a)}{\Gamma(\Lambda)}e^y y^{\Lambda-a-1}\left(1+\mathcal{O}\left(\frac{1}{y}\right)\right),\quad \Psi(\Lambda,1+a,y) = y^{-\Lambda}\left(1+\mathcal{O}\left(\frac{1}{y}\right)\right),
\end{equation}
for $y\to \infty$, as well as the asymptotics
\begin{equation}
    \Phi(\Lambda,1+a,0) = 1,
\end{equation}
\begin{equation}\label{eq:as_Psi_0}
    \Psi(\Lambda,1+a,y) \sim \begin{cases}
        y^{-a}\Gamma(a)/\Gamma(\Lambda),\quad & a\in(0,\infty),\\
        -\log(y)/\Gamma(\Lambda),\quad & a = 0,\\
        \Gamma(-a)/\Gamma(\Lambda-a),\quad & a \in (-1,0),\\
         1/\Gamma(\Lambda),\quad & a =-1, \\
         \Gamma(-a)/\Gamma(\Lambda-a),\quad & a \in (-\infty,-1).

    \end{cases}
\end{equation}
for $y\to 0$, see \cite{abramowitz1948handbook}. Furthermore, it follows again from \cite{abramowitz1948handbook,derkach1998extensions} that $\Psi^\prime (A,B,y) = -A \Psi(A+1,B+1,y)$, for $(A,B) \in \mathbb{R}^2$. Hence, for $y\to +\infty$,
\begin{equation}
    \Psi^\prime(\Lambda,1+a,x) \sim -\Lambda x^{-(1+\Lambda)},
\end{equation}
while, for $y\to 0^+$,
\begin{equation}\label{eq:as_psi_0_prime}
\Psi^{\prime}(\Lambda, 1+a, y)  \sim \begin{cases}
        -\Lambda \frac{\Gamma(1+a)}{\Gamma(1+\Lambda)} y^{-(1+a)},\quad & a\in(-1,\infty),\\
        \frac{\Lambda}{\Gamma(\Lambda+1)}\log(y), \quad & a=-1,\\
        - \Lambda \frac{\Gamma(-1-a)}{\Gamma(\Lambda - a)} ,\quad & a \in (-\infty,-1).
        \end{cases}
\end{equation}

\section{Explicit calculations on the transformation between $p_X$ and $p_Y$}

\subsection{Boundary conditions for self-adjoint operator}\label{app:boundary_conditions}

In this section we detail the calculations connecting the boundary condition needed for the l.c. point $0$ in the Laguerre operator (given by \eqref{eq:self_adjoint_condition}) and the ones need for the CEV Fokker--Plank operator. For the latter, we recall that the point $0$ is l.c. for $\gamma \in (1,2)$ (see \eqref{bc_gamma_1_2}) while $\infty$ is a l.c. for $\gamma\in (2,+\infty)$ (see \eqref{bc_gamma>2}). Before proceeding, we recall  the following relationships
\begin{equation}
    \begin{aligned}
        p_{\tilde{Y}}(\tilde{y}) &= p_{Y}(y) \implies p'_Y(y) = |\nu|p_{\tilde{Y}}'(\tilde{y});\\
        p_X(x) &= (2-\gamma)x^{1-\gamma}p_Y(x^{2-\gamma}) \implies\\ 
        p_{Y}'(y) &= \frac{1}{(2-\gamma)^2}\big[x^{2\gamma -2}p_X(x)' + (\gamma-1) p_X(x)x^{2\gamma-2}\big].
    \end{aligned}
\end{equation}

$(i)$ We set ourselves in case $a>0$, or equivalently $\gamma \in (0,2)$. If $a\geq1$, or equivalently $\gamma \leq 1$, no further boundary conditions are needed as both $\tilde{y}=0$ and $\tilde{y}=\infty$ are limit points. In the other case $a\in (0,1)$, or equivalently $\gamma \in (1,2)$, the boundary condition required for ensuring a self-adjoint extension is the same given in Equation \eqref{eq:self_adjoint_condition}. Moving from the variable $\tilde{Y}$ back to $Y$, we obtain
\begin{equation}
    \lim_{y\to 0} p_{Y}(y) + (2-\gamma)y p^{\prime}(y) - \theta |\nu|^{\frac{1}{2-\gamma}}y^{1+\frac{1}{2-\gamma}}p_{Y}'(y)  =0
\end{equation}
Moving now back from $Y$ to $X$, we conclude
\begin{equation}
\begin{aligned}
    &\lim_{x\to 0}  \frac{p_X(x)}{(2-\gamma) x^{1-\gamma}}+ \frac{1}{2-\gamma}\bigg[p'_X(x)x^{\gamma} + (\gamma-1) p_X(x)x^{\gamma-1}\bigg] - \theta |\nu|^{\frac{1}{2-\gamma}} x^{3-\gamma}\left[\frac{p_{X}'(x)}{ (2-\gamma)x^{1-\gamma}}- \frac{1-\gamma}{2-\gamma} \frac{p_X(x)}{x^{2-\gamma}} \right] =\\
    &\lim_{x\to 0}  \frac{\gamma}{\gamma-2 }p_X(x)x^{\gamma-1} + \frac{1}{2-\gamma}p'_X(x)x^{\gamma} - \frac{\theta |\nu|^{\frac{1}{2-\gamma}}}{(2-\gamma)^2} \left[p_{X}'(x)x^{\gamma+1}+ (\gamma-1) p_X(x)x^{\gamma} \right] =\\
    &\lim_{x\to 0}  \frac{\gamma}{\gamma-2 }p_X(x)x^{\gamma-1} + \frac{1}{2-\gamma}p'_X(x)x^{\gamma} - \frac{\theta |\nu|^{\frac{1}{2-\gamma}}}{(2-\gamma)^2}x^{\gamma} \left[p_{X}'(x)x+ (\gamma-1)p_X(x)\right]=0.
\end{aligned}
\end{equation}
 Notably, for the self-adjoint extension $\theta = +\infty$, the following boundary condition has to be imposed
\begin{equation}
\lim_{x\to0}x^\gamma[p'_X(x)x + (\gamma-1)p_X(x)]=0.
\end{equation}

\medskip
$(ii)$ We set ourselves in case $\gamma>2$, or equivalently $a = \frac{1}{2-\gamma} <0$. We recall that in this case $\nu >0$. We are interested in understanding the behavior of the boundary condition required for a self-adjoint extension. We recall that, for the generalized Laguerre operator, in this case $\tilde{y}=\infty$ is a limit point, while $\tilde{y}=0$ is a limit circle. Since in the $\tilde{Y}$ variable it holds that $p_{\tilde{Y}}(\tilde{y}) = e^{-\tilde{y}}f(\tilde{y})$, and clearly $\lim_{\tilde{y}\to0}e^{-\tilde{y}}=1$, condition \eqref{eq:self_adjoint_condition} yields
\begin{equation}\label{bound_cond_y_tilde}
    \lim_{\tilde{y}\to 0} p_{\tilde{Y}}(\tilde{y}) - \theta \tilde{y}^{1+\frac{1}{2-\gamma}}[p_{\tilde{Y}}'(\tilde{y}) + p_{\tilde{Y}}(\tilde{y})] =0
\end{equation}
Moving back to $Y$, one obtains 
\begin{equation}\label{bound_cond_y}
    \lim_{y\to 0} p_{Y}(y) - \theta \nu^{\frac{1}{2-\gamma}}y^{1+\frac{1}{2-\gamma}}\left[p_{Y}'(y) + \nu p_{Y}(y)\right] =0
\end{equation}
 Finally, moving back to the variable $X$,
\begin{equation}\label{bound_cond_x}
\begin{aligned}
    &\lim_{x\to +\infty}  \frac{p_X(x)}{(2-\gamma) x^{1-\gamma}} - \theta \nu^{\frac{1}{2-\gamma}} x^{3-\gamma}\left[\frac{p_{X}'(x)}{(2-\gamma)^2x^{2-2\gamma}}+ \frac{\gamma-1}{(2-\gamma)^2} \frac{ p_X(x)}{x^{3-2\gamma}} + \frac{\nu p_X(x)}{(2-\gamma)x^{1-\gamma}}\right] =\\
    &\lim_{x\to +\infty}  -\frac{p_X(x)x^{\gamma-1}}{\gamma-2 } - \theta \nu^{\frac{1}{2-\gamma}} \left[\frac{p_{X}'(x)x^{1+\gamma}}{ (\gamma-2)^2}+ \frac{\gamma-1}{(\gamma-2)^2} p_X(x)x^\gamma - \frac{2\mu p_X(x)x^2}{(\gamma-2)^2\sigma^2}\right] =\\
    &\lim_{x\to +\infty}  -\frac{p_X(x)x^{\gamma-1}}{\gamma-2 } - \theta \nu^{\frac{1}{2-\gamma}}  \frac{2}{(\gamma-2)^2\sigma^2}x\left[\frac{\sigma^2}{2}p_{X}'(x)x^{\gamma}+ \frac{\sigma^2(\gamma-1)}{2} p_X(x)x^{\gamma-1} - \mu p_X(x)x\right]=0
\end{aligned}
\end{equation}

We shall denote the flux associated to the Fokker-Plank equation by $F(p_X(x))$, and conclude that the boundary condition reads
\begin{equation}
\lim_{x\to +\infty}  -\frac{p_X(x)x^{\gamma-1}}{\gamma-2 } + \theta \frac{\nu^{\frac{1}{2-\gamma}} }{\eta} x\left[F(p_X(x))+\frac{\sigma^2}{2}p_X(x)x^{\gamma-1}\right]=0
\end{equation}
In particular, for $\theta = +\infty$ (case corresponing to Laguerre polynomials), the boundary condition reads
\begin{equation}
\lim_{x\to +\infty}  x\left[F(p_X(x))+\frac{\sigma^2}{2}p_X(x)x^{\gamma-1}\right]=0.
\end{equation}

\subsection{Transforming the Densities of the Fokker--Planck Equation}
\label{checktransform}
In this appendix, We show here that the transformation of densities $p_X$ and $p_Y$ under \eqref{pxpy} correctly relates the Fokker--Planck equations \eqref{FP_eq_CEV} and \eqref{FP_eq_CEV_mod}.\\
First, we calculate the drift term:
\begin{equation}
\begin{split}
       -\partial_x(\mu x p_X) & = -\mu p_X -\mu x \partial_x p_X \\
       & = -\mu (2-\gamma)^2 y^{\frac{1-\gamma}{2-\gamma}} p_Y -\mu (2-\gamma)^2y^{\frac{1-\gamma}{2-\gamma}} y \partial_y p_Y \\
       & =- (2-\gamma)y^{\frac{1-\gamma}{2-\gamma}} \partial_y[(2-\gamma)\mu y p_y].
\end{split}
\end{equation}
Similarly, we calculate the diffusion term:
\begin{equation}
\begin{split}
       \frac{\partial^2}{\partial x^2}\left[ x^\gamma p_X\right] & = \gamma (\gamma-1) x^{\gamma -2}p_X + 2\gamma x^{\gamma-1}\partial_x p_X + x^\gamma \partial_{xx}p_X  \\
       &= \gamma(\gamma -1) (2-\gamma)y^\frac{-1}{2-\gamma}p_Y + 2\gamma(2-\gamma) y^{\frac{\gamma -1 }{2-\gamma}}\partial_x[y^\frac{1-\gamma}{2-\gamma}p_Y] + y^\frac{\gamma}{2-\gamma}\partial_{xx}[y^\frac{1-\gamma}{2-\gamma}p_Y]
\end{split}
\end{equation}
We can now calculate, separately, 
\begin{equation}
\partial_xX[y^\frac{1-\gamma}{2-\gamma}p_Y] = (1-\gamma) y^\frac{-\gamma}{2-\gamma}p_Y + (2-\gamma)y^\frac{2-2\gamma}{1-\gamma}\partial_y p_Y
\end{equation}
and
\begin{equation}
       \partial_{xx}[y^\frac{1-\gamma}{2-\gamma}p_Y]  = -\gamma (1-\gamma) y^\frac{-1-\gamma}{2-\gamma}p_Y + 3(2-\gamma)(1-\gamma)y^\frac{1-2\gamma}{2-\gamma}\partial_yp_Y + (2-\gamma)^2 y^\frac{3-3\gamma}{2-\gamma}\partial_{yy}p_Y. 
\end{equation}
Putting the above together, we obtain
\begin{equation}
\begin{split}
       \frac{\partial^2}{\partial x^2}\left[\frac{\sigma^2}{2} x^\gamma p_X\right] &= 0p_Y + \frac{\sigma^2}{2}(2-\gamma)^2(3-\gamma) y^\frac{1-\gamma}{2-\gamma}\partial_y p_Y + \frac{\sigma^2}{2}(2-\gamma)^3 y^\frac{3-2\gamma}{2-\gamma}\partial_{yy}p_Y  \\
       &= \frac{\sigma^2}{2}(2-\gamma)^2y^\frac{1-\gamma}{2-\gamma}[(3-\gamma)\partial_y p_Y + (2-\gamma) y \partial_{yy}p_Y] \\
       & = (2-\gamma)y^\frac{1-\gamma}{2-\gamma} \partial_{yy}\left[\frac{\sigma^2}{2}(2-\gamma)^2 y p_Y\right] - (2-\gamma)y^\frac{1-\gamma}{2-\gamma} (1-\gamma)(2-\gamma)\partial_y p_Y,
\end{split}
\end{equation}
and arrive at 
\begin{equation}
\partial_t p_Y (2-\gamma)y^\frac{1-\gamma}{2-\gamma} = (2-\gamma)y^\frac{1-\gamma}{2-\gamma}\left\{\partial_y\left[(2-\gamma)\left(\mu y + \frac{\sigma^2}{2}(1-\gamma)\right)\right] + \partial_{yy}\left[\frac{\sigma^2}{2}(2-\gamma)^2 y p_Y\right]\right\},
\end{equation}
which, up to removing the term $(2-\gamma)y^\frac{1-\gamma}{2-\gamma}$, is the Fokker-Plank equation for $Y$.

 \end{document}